\newcommand{\be}{\begin{eqnarray}}
\newcommand{\ee}{\end{eqnarray}}
\newcommand{\ce}{\begin{eqnarray*}}
\newcommand{\de}{\end{eqnarray*}}
\newtheorem{theorem}{Theorem}[section]
\newtheorem{lemma}[theorem]{Lemma}
\newtheorem{remark}[theorem]{Remark}
\newtheorem{definition}[theorem]{Definition}
\newtheorem{proposition}[theorem]{Proposition}
\newtheorem{Examples}[theorem]{Examples}
\newtheorem{corollary}[theorem]{Corollary}
\def\[{{\Big[}}
\def\]{{\Big]}}
\def\<{{\langle}}
\def\>{{\rangle}}
\def\({{\Big(}}
\def\){{\Big)}}
\def\bx{{\mathbf{x}}}
\def\bt{\begin{theorem}}
\def\et{\end{theorem}}
\def\bl{\begin{lemma}}
\def\el{\end{lemma}}
\def\br{\begin{remark}}
\def\er{\end{remark}}
\def\bx{\begin{Examples}}
\def\ex{\end{Examples}}
\def\bd{\begin{definition}}
\def\ed{\end{definition}}
\def\bp{\begin{proposition}}
\def\ep{\end{proposition}}
\def\bc{\begin{corollary}}
\def\ec{\end{corollary}}
\def\geq{\geqslant}
\def\leq{\leqslant}
\theoremstyle{thmit} 
\theoremstyle{thmrm} 
\newtheorem*{oldproof}{Proof}
\renewenvironment{proof}[1][{}]{\begin{oldproof}[#1]}{\qed\end{oldproof}}
\title{S\MakeLowercase{low manifolds for a nonlocal fast-slow stochastic evolutionary system with stable} L$\acute{\MakeLowercase{e}}$\MakeLowercase{vy noise}\footnote{The research   was partly supported  by the NSF   grant 1620449   and NSFC grants  11531006 and  11771449.}}
\author{H\MakeLowercase{ina} Z\MakeLowercase{ulfiqar}}
\address{School of Mathematics and Statistics,
Huazhong University of Science and Technology,\\ Wuhan 430074, China \\ \& Center for  Mathematical Sciences, Huazhong University of Science and Technology\\zhinazulfiqar@gmail.com}
\author{S\MakeLowercase{henglan} Y\MakeLowercase{uan}}
\address{School of Mathematics and Statistics,
Huazhong University of Science and Technology,\\ Wuhan 430074, China \\ \& Center for  Mathematical Sciences, Huazhong University of Science and Technology\\shenglanyuan@hust.edu.cn}
\author{Z\MakeLowercase{iying} H\MakeLowercase{e}}
\address{School of Mathematics and Statistics,
Huazhong University of Science and Technology,\\ Wuhan 430074, China \\ \& Center for  Mathematical Sciences, Huazhong University of Science and Technology\\ziyinghe@hust.edu.cn}
\author{J\MakeLowercase{inqiao} D\MakeLowercase{uan}}
\address{Department of Applied Mathematics, Illinois Institute of Technology, Chicago, IL 60616,\\ USA   \\duan@iit.edu }
\begin{document}

\maketitle
\begin{abstract}
 \indent This work aims at understanding the slow dynamics of a nonlocal fast-slow stochastic evolutionary system with stable L$\acute{e}$vy noise.   Slow manifolds along with exponential tracking property for a nonlocal fast-slow stochastic evolutionary system with stable L$\acute{e}$vy noise are constructed  and two examples with numerical simulations are presented to illustrate the results. \\
\textbf{Keywords:} Nonlocal Laplacian, fast-slow stochastic system, random slow manifold, non-Gaussian L\'evy motion.
\end{abstract}

\section{Introduction}\label{s:1}
\begin{linenomath*}
Over the last few years, the theory of nonlocal operators attracts a lot of attention from researchers because most of the complex phenomena \cite{caffarelli2010drift,meerschaert2012stochastic,metzler2004restaurant} involve nonlocal operators. Many researchers made a lot of progress by working on different type of nonlocal operators. The usual Laplacian operator $\Delta$ is not a nonlocal operator. It generates Brownian motion (or Wiener process), which is Gaussian process. While nonlocal Laplacian operator $(-\Delta)^{\frac{\alpha}{2}}$ generates a symmetric $\alpha$-stable L$\acute{e}$vy motion, for $\alpha\in(0,2)$, \cite{applebaum2009levy, duan2015introduction}. This motion is non-Gaussian process.  \\
\indent The theory of invariant manifolds is very helpful for describing and understanding dynamics of deterministic systems under stochastic forces. It was introduced in \cite{hadamard1901iteration,caraballo2004existence,duan2004smooth,chow1988invariant}, while for deterministic system its modification was given in \cite{ruelle1982characteristic,bates1998existence,chicone1997center,chow1991smooth,henry2006lecture} by  numerous authors.\\
There is very rich and papular history for the theory of invariant manifold \cite{bates1998existence,henry2006lecture} in finite and infinite deterministic systems. Furthermore, invariant manifold provides us very helpful tool in investigating the dynamical conduct of stochastic systems \cite{chueshov2010master,chen2014slow,duan2004smooth}. An invariant manifold for a fast-slow stochastic system in which fast mode is indicated by the slow mode tends to slow manifold as scale parameter approaches to zero. Moreover, slow manifold for a fast-slow stochastic system tends to critical manifold as scale parameter approaches to zero.\\
\indent The existence of slow manifold for stochastic system based on Brownian motion has been widely constructed \cite{duan2015introduction,fu2012slow,schmalfuss2008invariant,wang2013slow}. The numerical simulation for slow manifold and establishment of parameter estimation are provided in \cite{ren2015approximation,ren2015parameter}. L$\acute{e}$vy motions appear in many systems as models for fluctuations, for instance, it appear in the turbulent motions of fluid flows \cite{weeks1995observation}. A few monographs about stochastic ordinary differential equations processed by L$\acute{e}$vy noise are devoted in \cite{applebaum2009levy,cont2004option}. The existence of slow manifold under non-Gaussian L$\acute{e}$vy noise is constructed in \cite{yuan2017slow}. While the study of dynamics for nonlocal stochastic differential equations processed by non-Gaussian L$\acute{e}$vy noise is still under development. \\
\indent The main objective of this article is to construct the existence of slow manifold for a nonlocal stochastic dynamical system processed by $\alpha$-stable L$\acute{e}$vy noise with $\alpha \in (1,2)$ defined in a separable Hilbert space $\mathbb{H}=H_{1}\times H_{2}$ having norm
\begin{align*}||\cdot||_{\mathbb{H}}=||\cdot||_{1}+||\cdot||_{2}.\end{align*} Namely, we consider the system
\begin{align} &\dot{x}=-\frac{1}{\epsilon}(-\Delta)^{\frac{\alpha}{2}}x+\frac{1}{\epsilon}f(x,y)+\frac{\sigma_{1}}{\sqrt[\alpha_{1}]{\epsilon}}\dot{L}_{t}^{\alpha_{1}},\mbox{ in }H_{1}\\&\dot{y}=Jy+g(x,y)+\sigma_{2}\dot{L}_{t}^{\alpha_{2}},\mbox{ in }H_{2}\\ &x|(-1,1)^{c}=0,\indent \indent y|(-1,1)^{c}=0. \end{align}Here, for $u\in \mathbb{R}$ and $\alpha\in(0,2),$\\$$(-\Delta)^{\frac{\alpha}{2}}x(u,t)=\frac{2^{\alpha}\Gamma(\frac{1+\alpha}{2})}{\sqrt{\pi}|\Gamma(\frac{-\alpha}{2})|}P.V.\int_{\mathbb{R}}\frac{x(u,t)-x(v,t)}{|u-v|^{1+\alpha}}dv,$$ \\is known as fractional Laplacian operator with the Cauchy principle value $(P.V.)$. The Gamma function $\Gamma$ is defined by\\$$\Gamma(q)=\int_{0}^{\infty}t^{q-1}e^{-t}dt,\indent \forall \indent q>0.$$\indent We take $H_{1}=L^{2}(-1,1)$ and $H_{2}$ a separable Hilbert space. The norm of $H_{1}$ and $H_{2}$ are $||\cdot||_{1}$ and $||\cdot||_{2}$ respectively. In the system $(1)-(3)$, $\epsilon$ is a parameter with the property $0<\epsilon\ll1$. This parameter represents the ratio of two times scales such that $||\frac{dx}{dt}||_{1}\gg||\frac{dy}{dt}||_{2}.$ The operator $J$ is linear operator satisfying an exponential dichotomy condition (S1) presented in next section. Lipschitz continuous  operators $f$ and $g$ are nonlinear with $f(0,0)=0=g(0,0)$. The noise process $L_{t}^{\alpha}$ are two sided symmetric $\alpha$-stable L$\acute{e}$vy process taking values in Hilbert space $\mathbb{H}$, where $\alpha \in (1,2)$ is the index of stability \cite{applebaum2009levy,chow1991smooth}. \end{linenomath*}
\\
\indent We introduce a random transformation such that a solution of stochastic dynamical system $(1)-(3)$ can be indicated as a transformed solution of some random dynamical system. After that, we establish the construction of slow manifold for random dynamical system with the help of Lyapunov-Perron method \cite{caraballo2004existence,duan2004smooth,chow1988invariant}.\\
\indent The setup of this article is as follows. In Section 2, some fundamental concepts about random dynamical system, nonlocal fractional Laplacian and a detail discussion about differential equation processed by L$\acute{e}$vy motion are given. In Section 3, we convert stochastic dynamical system $(1)-(3)$ to random dynamical system by introducing a random transformation. In Section 4, we review concept about random invariant manifold and establish the existence of exponential tracking slow manifold for random dynamical system. In section 5, an approximation to slow manifold is established. While in Section 6, two examples with numerical simulations are presented to illustrate the results.

\section{Preliminaries}
In this section we recall out some ideas about fractional Laplacian operator and random dynamical system processed by L$\acute{e}$vy motion.\\
\indent The nonlocal fractional Laplacian operator is represented by $A_{\alpha}$ and considered as $A_{\alpha}=-(-\Delta)^{\frac{\alpha}{2}}$.\\
\begin{lemma}\begin{linenomath*} (\cite{bai2017slow}) The fractional Laplacian operator $A_{\alpha} $ has the upper-bound$$||e^{A_{\alpha}t}||_{1}\leq C e^{-\lambda_{1}t},\indent t\geq0,$$where the constant $C>0$  is independent of $t$ and $\lambda_{1}$. Nonlocal fractional Laplacian operator is also known as a sectorial operator.\\\end{linenomath*}\end{lemma}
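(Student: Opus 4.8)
The plan is to read everything off from the spectral theory of the Dirichlet (restricted) fractional Laplacian on the bounded interval $(-1,1)$. The first step is to fix the right realization of the operator: on $H_1=L^2(-1,1)$ one takes $(-\Delta)^{\frac{\alpha}{2}}$ to be the nonnegative self-adjoint operator associated with the closed symmetric Gagliardo-type form
\[
\mathcal{E}_\alpha(u,v)=\frac{c_\alpha}{2}\int_{\mathbb{R}}\int_{\mathbb{R}}\frac{(u(s)-u(r))(v(s)-v(r))}{|s-r|^{1+\alpha}}\,dr\,ds,
\]
where $c_\alpha=\frac{2^\alpha\Gamma(\frac{1+\alpha}{2})}{\sqrt{\pi}\,|\Gamma(-\frac{\alpha}{2})|}$ is the normalization constant of the pointwise definition and the form domain consists of the $L^2$ functions vanishing a.e.\ on $(-1,1)^c$ with $\mathcal{E}_\alpha(u,u)<\infty$. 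From the standard properties of this form (see \cite{bai2017slow}) one obtains: the operator is self-adjoint and nonnegative; its form domain embeds compactly into $L^2(-1,1)$ because $(-1,1)$ is bounded, so $(-\Delta)^{\frac{\alpha}{2}}$ has compact resolvent and purely discrete spectrum $0<\lambda_1\le\lambda_2\le\cdots\to\infty$ with an $H_1$-orthonormal basis of eigenfunctions $\{e_n\}_{n\ge1}$; and the bottom eigenvalue satisfies the Poincar\'e-type lower bound $\mathcal{E}_\alpha(u,u)\ge\lambda_1\|u\|_1^2$ with $\lambda_1>0$.

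Granting this, $A_\alpha=-(-\Delta)^{\frac{\alpha}{2}}$ is self-adjoint with spectrum contained in $(-\infty,-\lambda_1]$, so it generates an analytic $C_0$-semigroup which the spectral theorem represents by $e^{A_\alpha t}x=\sum_{n\ge1}e^{-\lambda_n t}\langle x,e_n\rangle_1 e_n$. Using $\lambda_n\ge\lambda_1$ for every $n$ together with Parseval's identity,
\[
\|e^{A_\alpha t}x\|_1^2=\sum_{n\ge1}e^{-2\lambda_n t}|\langle x,e_n\rangle_1|^2\le e^{-2\lambda_1 t}\|x\|_1^2,\qquad t\ge0,
\]
which gives the asserted estimate $\|e^{A_\alpha t}\|_1\le C e^{-\lambda_1 t}$ with $C=1$ admissible (carrying a general constant $C\ge1$ costs nothing and is convenient for later estimates). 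For the last sentence of the statement, sectoriality of $(-\Delta)^{\frac{\alpha}{2}}$ is automatic for a nonnegative self-adjoint operator: its numerical range lies in $[\lambda_1,\infty)$, so every $\lambda$ lying outside a sector around $[\lambda_1,\infty)$ is in the resolvent set and satisfies $\|(\lambda-(-\Delta)^{\frac{\alpha}{2}})^{-1}\|_1\le 1/\mathrm{dist}(\lambda,[\lambda_1,\infty))$, which is precisely the sector resolvent bound; equivalently, it is encoded in the analyticity of $e^{A_\alpha t}$ just obtained.

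The main obstacle is the first step, the operator-theoretic setup: one must check that the Gagliardo form is closed on the subspace of functions supported in $\overline{(-1,1)}$, that the associated self-adjoint operator coincides with the principal-value integral on smooth compactly supported functions, and—crucially for the exponential bound—that the Poincar\'e constant $\lambda_1$ is strictly positive rather than merely nonnegative. All three facts are classical for the fractional Laplacian on a bounded domain: they rest on the compact embedding $H^{\alpha/2}\hookrightarrow L^2$ on bounded sets and on the fact that constants do not lie in the Dirichlet form domain. Once these are invoked, the conclusion of the lemma is the short spectral computation above; no genuinely new difficulty arises, and the content of the statement is exactly that boundedness of $(-1,1)$ forces a spectral gap $\lambda_1>0$ and hence exponential decay of the semigroup.
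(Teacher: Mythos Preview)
The paper does not supply a proof of this lemma at all: it is stated with a citation to \cite{bai2017slow} and then used as a black box. Your spectral-theoretic argument is a correct and self-contained way to fill in that citation. The key ingredients you identify---self-adjointness of the Dirichlet restricted fractional Laplacian via the Gagliardo form, compact resolvent on the bounded interval $(-1,1)$ giving a discrete spectrum with $\lambda_1>0$ (exactly the content of the paper's Lemma~2.2), and then the elementary Parseval estimate $\|e^{A_\alpha t}x\|_1\le e^{-\lambda_1 t}\|x\|_1$---are the standard route, and they also yield sectoriality for free since any self-adjoint operator bounded below is sectorial. There is nothing to compare against in the paper itself; your write-up simply supplies what the authors outsourced.
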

\begin{lemma} \begin{linenomath*}(\cite{kwasnicki2012eigenvalues}) The spectral problem$$(-\Delta)^{\frac{\alpha}{2}}\varphi(u)=\lambda\varphi(u),\indent \varphi|(-1,1)^{c}=0,$$where $\varphi(\cdot)\in H_{1}$ are defined in (\cite{kwasnicki2012eigenvalues}), has eigenvalues in the interval (-1,1) satisfying the form $$\lambda_{l}=(\frac{l\pi}{2}-\frac{(2-\alpha)\pi}{8})^{\alpha}+O(\frac{1}{l}),\indent (l\rightarrow\infty).$$Furthermore the eigenvalues of fractional Laplacian are such that,$$0<\lambda_{1}<\lambda_{2}\leq \lambda_{3}\leq\cdot\cdot\cdot\leq\lambda_{l}\leq \cdot\cdot\cdot, \mbox{ for } l=1,2,3,\cdot\cdot\cdot.$$\end{linenomath*}\end{lemma}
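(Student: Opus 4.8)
The plan is to regard $A_{\alpha}=-(-\Delta)^{\alpha/2}$ with the exterior Dirichlet condition $\varphi|(-1,1)^{c}=0$ as a self-adjoint operator on $H_{1}=L^{2}(-1,1)$, and to treat the qualitative assertions (discreteness, positivity, simplicity of $\lambda_{1}$) separately from the asymptotic formula for $\lambda_{l}$.

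\emph{Qualitative part.} I would base this on the closed, symmetric, nonnegative quadratic form
$$\mathcal{E}(\varphi,\varphi)=\frac{2^{\alpha}\Gamma(\tfrac{1+\alpha}{2})}{2\sqrt{\pi}\,|\Gamma(-\tfrac{\alpha}{2})|}\int_{\mathbb{R}}\int_{\mathbb{R}}\frac{|\varphi(u)-\varphi(v)|^{2}}{|u-v|^{1+\alpha}}\,du\,dv=\int_{\mathbb{R}}|\xi|^{\alpha}\,|\widehat{\varphi}(\xi)|^{2}\,d\xi,$$
with form domain $\widetilde{H}^{\alpha/2}(-1,1)=\{\varphi\in H^{\alpha/2}(\mathbb{R}):\varphi=0\text{ a.e.\ on }(-1,1)^{c}\}$. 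The fractional Poincar\'e inequality gives $\mathcal{E}(\varphi,\varphi)\ge c\,\|\varphi\|_{1}^{2}$ on this space for some $c>0$, so the operator is bounded below by $c$; since $\widetilde{H}^{\alpha/2}(-1,1)$ embeds compactly into $L^{2}(-1,1)$, the resolvent is compact and the spectrum is a sequence $0<c\le\lambda_{1}\le\lambda_{2}\le\cdots\to\infty$, which already yields the non-strict ordering and positivity. For simplicity of the bottom eigenvalue I would use that the killed $\alpha$-stable semigroup $e^{A_{\alpha}t}$ on $(-1,1)$ has a strictly positive transition kernel and is irreducible, hence positivity improving; the Krein--Rutman theorem then forces the ground state to be of one sign and one-dimensional, so $\lambda_{1}<\lambda_{2}$, and we obtain the full chain $0<\lambda_{1}<\lambda_{2}\le\lambda_{3}\le\cdots$.

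\emph{Asymptotics.} This is the substantive part, and I would follow the Wiener--Hopf strategy of \cite{kwasnicki2012eigenvalues}. The Fourier multiplier of $(-\Delta)^{\alpha/2}$ is $|\xi|^{\alpha}$; factoring $|\xi|^{\alpha}=\Phi^{+}(\xi)\,\Phi^{-}(\xi)$ with $\Phi^{\pm}$ analytic and zero-free in the lower and upper half-planes makes the half-line problem on $(0,\infty)$ solvable, producing for each $\mu>0$ a generalized eigenfunction of eigenvalue $\mu^{\alpha}$ whose profile is $\sin\!\big(\mu u+\tfrac{(2-\alpha)\pi}{8}\big)$ plus a transient concentrated near the boundary. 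Splicing the solution anchored at $u=-1$ with the one anchored at $u=+1$ and matching the two sine profiles inside the interval forces the quantization $\mu_{l}=\tfrac{l\pi}{2}-\tfrac{(2-\alpha)\pi}{8}$ and yields a trial function $\psi_{l}$ on $(-1,1)$ (sanity check: at $\alpha=2$ the phase shift vanishes and $\mu_{l}=l\pi/2$, recovering the classical Dirichlet eigenvalues $(l\pi/2)^{2}$). The decisive estimate is that the absolute $L^{2}$-defect $\|(-\Delta)^{\alpha/2}\psi_{l}-\mu_{l}^{\alpha}\psi_{l}\|_{1}$, with $\psi_{l}$ normalized, stays bounded as $l\to\infty$; since $\mu_{l}^{\alpha}\sim(l\pi/2)^{\alpha}$ this is a relative accuracy of order $l^{-\alpha}$, and it is exactly here that the sharp decay estimates on the Wiener--Hopf transients are needed. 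Given such quasimodes, the spectral approximation lemma yields $\mathrm{dist}\big(\mu_{l}^{\alpha},\sigma((-\Delta)^{\alpha/2})\big)=O(1)$, and a quasimode counting argument on the nearly orthonormal family $\psi_{1},\dots,\psi_{N}$ --- together with the min--max upper bound from that family and the Weyl-type count $\#\{j:\lambda_{j}\le\Lambda\}\sim\tfrac{2}{\pi}\Lambda^{1/\alpha}$ to rule out extra eigenvalues --- pins the labelling and gives $\lambda_{l}=\big(\tfrac{l\pi}{2}-\tfrac{(2-\alpha)\pi}{8}\big)^{\alpha}+O(1/l)$ as $l\to\infty$.

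The step I expect to be the main obstacle is the construction and error control of the half-line quasimodes: correctly identifying the phase shift $\tfrac{(2-\alpha)\pi}{8}$ and, above all, controlling the next-order transient uniformly in $\mu$ so that the absolute $L^{2}$-defect does not grow with $l$. This is the only place where the explicit Wiener--Hopf factorisation of $|\xi|^{\alpha}$ must really be carried out, whereas discreteness, positivity, and the simplicity of $\lambda_{1}$ are comparatively routine consequences of self-adjointness, compactness of the resolvent, and positivity of the stable kernel.
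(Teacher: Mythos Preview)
The paper does not prove this lemma at all: it is stated as a quotation of \cite{kwasnicki2012eigenvalues} and no argument is supplied, so there is nothing in the paper to compare your proposal against. Your sketch is in fact an accurate outline of the strategy in the cited source --- self-adjointness plus compact resolvent for the qualitative ordering, positivity improvement of the killed stable semigroup for the strict inequality $\lambda_{1}<\lambda_{2}$, and the Wiener--Hopf construction of half-line quasimodes with phase shift $\tfrac{(2-\alpha)\pi}{8}$ for the asymptotics --- so you have correctly identified both the method and the genuinely hard step (uniform control of the transient in the quasimode defect). Relative to the present paper, however, all of this is overkill: the authors only \emph{use} the conclusions (positivity of $\lambda_{1}$ for Lemma~2.1 and the spectral decomposition in Lemma~3.2), and treat the lemma as a black box.
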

\bd (\cite{yuan2017slow})
\begin{linenomath*}Let $(\Omega, \mathcal{F},\mathbb{P})$ be a probability space and $\theta=\{\theta_{l}\}_{l\in\mathbb{R}}$ be a flow on $\Omega$ such that\\
 $\bullet$  $\theta_{0}=Id_{\Omega};$\\
   $\bullet$ $\theta_{l_{1}}\theta_{l_{2}}=\theta_{l_{1}+l_{2}},$ where $l_{1},l_{2}\in\mathbb{R};$\\and it can be defined by a mapping $$\theta:\mathbb{R}\times\Omega\rightarrow\Omega.$$
   The above mapping $(l,\omega)\mapsto \theta_{l}\omega$ is $(\mathcal{B(\mathbb{R})\otimes\mathcal{F},F})$-measurable, and $\theta_{l}\mathbb{P}=\mathbb{P}$ for all $l\in\mathbb{R}$. Here additionally we consider that the probability measure $\mathbb{P}$ is invariant with regard to the flow $\{\theta_{l}\}_{l\in\mathbb{R}}$. Then $\Theta=\(\Omega, \mathcal{F},\mathbb{P},\theta)$ is known as a metric dynamical system.\end{linenomath*}\ed
\begin{linenomath*} In this work, let $L_{t}^{\alpha}$, $\alpha\in (1,2)$ be a two sided symmetric $\alpha$-stable L$\acute{e}$vy process having values in Hilbert space
 $\mathbb{H}$. Take a canonical sample space for two sided symmetric $\alpha$-stable L$\acute{e}$vy process.
 Let $\Omega=D(\tilde{K},\mathbb{H})$ be the space of c$\grave{a}$dl$\grave{a}$g functions, having zero value at $t=0$. These functions are defined on compact subset $\tilde{K}$ of $\mathbb{R}$ and taken values in Hilbert space $\mathbb{H}$. If we use the usual open-compact metric, then the space $D(\tilde{K},\mathbb{H})$ may not separable and complete. The space can be made complete and separable by defining another metric $d_{\tilde{K}}^{0}$ just as the space of real valued c$\grave{a}$dl$\grave{a}$g functions can be made complete and separable on unit interval or on $\mathbb{R}$  \cite{wei2016weak,chao2018stable}. For making space $D(\tilde{K},\mathbb{H})$ complete and separable, let $D^{0}(\tilde{K},\mathbb{H})$ be the subset of $D(\tilde{K},\mathbb{H})$ as defined in definition 3.6 of \cite{wei2016weak}. Hence, the class of functions denoted by $\Lambda_{\tilde{K}}^{0}$ with respect to new metric is
 $$\Lambda_{\tilde{K}}^{0}=\Big\{\mbox{ mapping }\lambda:\tilde{K}\rightarrow\tilde{K} \mbox{ is a strictly increasing and continuous function}\Big\}.$$
  Then $d_{\tilde{K}}^{0}$ corresponding to class $\Lambda_{\tilde{K}}^{0}$ is given by
 \begin{align*}d_{\tilde{K}}^{0}(f_{1},f_{2})=&\mathop {\mbox{inf} }\limits_{\lambda \in \Lambda_{\tilde{K}}^{0} }\mbox{ max }\bigg \{\mathop {\mbox{ sup } }\limits_{x>x^{*}, x, x^{*} \in \tilde{K}}\mbox{ log }\left|\left[ \frac{||\lambda(x)||_{\mathbb{R}}-\lambda(x^{*})||_{\mathbb{R}}}{||x||_{\mathbb{R}}-||x^{*}||_{\mathbb{R}}}\right]\right|,\\&||\lambda-I||_{\mbox{sup}},||f_{1}-f_{2}\lambda||_{\mbox{sup}}\bigg \},\end{align*}
 for $f_{1}, f_{2}$ in $D^{0}(\tilde{K},\mathbb{H})$. \\By Theorem 3.2 in \cite{wei2016weak}, the metric space $[D^{0}(\tilde{K},\mathbb{H}),d_{\tilde{K}}^{0}]$ is complete and separable. Hence, the class of functions $D^{0}(\tilde{K},\mathbb{H})$ is equipped with Skorokhod's topology, which is generated by Skorokhod's metric $d_{\tilde{K}}^{0}$, is a Polish space, i.e., a complete and separable space. On this space, take a measurable flow $\theta=\{\theta_{l}\}_{l\in\tilde{K}}$ is defined namely a mapping
 \begin{align*}
 \theta:\tilde{K}\times D^{0}(\tilde{K},\mathbb{H})\rightarrow D^{0}(\tilde{K},\mathbb{H}),\mbox{ such that}, \theta_{l}\omega(\cdot)=\omega(\cdot+l)-\omega(l),\end{align*}
 where $\omega\in D^{0}(\tilde{K},\mathbb{H})$ and $l \in \tilde{K}$.\\
\indent Suppose that $\mathbb{P}$ be the probability measure on $\mathcal{F}$ defined by the distribution of two sided symmetric $\alpha$-stable L$\acute{e}$vy motion. The sample path of L$\acute{e}$vy motion are in $D(\tilde{K},\mathbb{H})$. Note that $\mathbb{P}$ is ergodic with regard to $\{\theta_{l}\}_{l\in\tilde{K}}$. Thus $( D^{0}(\tilde{K},\mathbb{H}), d_{\tilde{K}}^{0}, \mathbb{P}, \{\theta_{l}\}_{l\in\tilde{K}})$ is a metric dynamical system. Instead of considering $D(\tilde{K},\mathbb{H})$, here we consider $D^{0}(\tilde{K},\mathbb{H})$, a $\{\theta_{l}\}_{l\in\tilde{K}}$-invariant subset $\Omega_{1}=D^{0}(\tilde{K},\mathbb{H})\subset \Omega=D(\tilde{K},\mathbb{H})$ of $\mathbb{P}$-measure 1, where $D^{0}(\tilde{K},\mathbb{H})$ is $\{\theta_{l}\}_{l\in\tilde{K}}$-invariant mean that $\theta_{l}\Omega_{1}=\Omega_{1}$ for $l\in \tilde{K}$. Since on $\mathcal{F}$, we take the restriction of measure $\mathbb{P}$, but still it is denoted by $\mathbb{P}$. For our project, we take scalar L$\acute{e}$vy motion under consideration.\end{linenomath*}
\bd (\cite{arnold2013random})
 \begin{linenomath*}A cocycle $\phi$ satisfies \begin{align*}&\phi(0,\omega,x)=x,\\&\phi(l_{1}+l_{2},\omega,x)=\phi(l_{2},\theta_{l_{1}}\omega,\phi(l_{1},\omega,x)).\end{align*}It is $(\mathcal{B(\mathbb{R^{+}})\otimes\mathcal{F}\otimes\mathcal{B(\mathbb{H})},\mathcal{F}})$-measurable and defined by mapping:$$\phi:\mathbb{R^{+}}\times\Omega\times\mathbb{H}\rightarrow\mathbb{H},$$ for $x\in\mathbb{H}$, $\omega\in\Omega$ and $l_{1},l_{2}\in\mathbb{R^{+}}$. Metric dynamical system $(\Omega,\mathcal{F},\mathbb{P},\theta)$, together with $\phi$, generates a random dynamical system.\end{linenomath*}\ed
\begin{linenomath*} If $x\mapsto\phi(l,\omega,x)$ is continuous (differentiable) for $\omega\in\Omega$ and $l\geq0$, then random dynamical system is continuous (differentiable).
There is a family of non-empty and closed sets $\mathcal{M}=\{\mathcal{M}(\omega):\omega\in \Omega\}$ in metric space $(\mathbb{H},||\cdot||_{\mathbb{H}})$. This family of sets is called a random set if for all $x'\in\mathbb{H}$ the map:$$\omega\mapsto\mathop {\inf }\limits_{x \in \mathcal{M}(\omega)}||x-x'||_{\mathbb{H}},$$ is a random variable.\end{linenomath*}
\bd
\begin{linenomath*}(\cite{duan2015introduction}) For a random dynamical system $\phi$, if random variable $x(\omega)$ taking values in $\mathbb{H}$ satisfies $$\phi(l,\omega,x(\omega))=x(\theta_{l}\omega),\indent \indent a.s.$$for every $l\geq0$. Then the same random variable $x(\omega)$ is called stationary orbit. It is also known as random fixed point.\end{linenomath*}\ed
\bd (\cite{fu2012slow})
\begin{linenomath*} For a random dynamical system $\phi$, a random set $\mathcal{M}=\{\mathcal{M}(\omega):\omega\in \Omega\}$ is said to be random positively invariant set if$$\phi(l,\omega,\mathcal{M}(\omega))\subset \mathcal{M}(\theta_{l}\omega),$$ for every $\omega\in\Omega$ and $l\geq0$.\end{linenomath*}\ed
\bd \cite{yuan2017slow} \begin{linenomath*} Define a map $$h:H_{2}\times\Omega\rightarrow H_{1},$$ such that $y\mapsto h(y,\omega)$ is Lipschitz continuous for every $\omega\in\Omega$. Take $$\mathcal{M}(\omega)=\{(h(y,\omega),y):y\in H_{2}\},$$
such that random positively invariant set $\mathcal{M}=\{\mathcal{M}(\omega):\omega\in \Omega\}$ can be represented as a graph of Lipschitz continuous map $h$, then $\mathcal{M}$ is said to be Lipschitz continuous invariant manifold.\end{linenomath*}\ed
\begin{linenomath*}Moreover, $\mathcal{M}(\omega)$ is said to have exponential tracking property, if there exist an $x'\in \mathcal{M}(\omega)$ for all $x\in \mathbb{H}$ satisfying $$||\phi(l,\omega,x)-\phi(l,\omega,x')||_{\mathbb{H}}\leq c_{1}(x,x',\omega)e^{c_{2}l}||x-x'||_{\mathbb{H}},\indent l\geq0,$$for every $\omega\in \Omega$. Here $c_{1}$ is positive random variable, while  $c_{2}$ is negative constant.
\end{linenomath*}
 \section{Stochastic System to Random Dynamical System}
\begin{linenomath*} In the fast-slow system (1)-(2) processed by symmetric $\alpha$-stable L$\acute{e}$vy noise, the state space for the fast mode is $H_{1}=L^{2}(-1,1)$ and the state space for the slow mode is $H_{2}$. In order to establish the slow manifold, we suppose the following conditions on nonlocal system (1)-(2).\\
\textbf{(S1)} With regards to linear part of (2), there is a constant $\gamma_{J}>0$ such that
$$||e^{Jt}y||_{2}\leq e^{\gamma_{J} t}||y||_{2}, t\leq0, \mbox{ for all } y\in H_{2}.$$
\textbf{(S2)} With regards to nonlinear part of (1)-(2), there is a constant $K>0$ such that for all $(x_{i},y_{i})^{T}$ in $H_{1}\times H_{2}$ and for all
$(x_{j},y_{j})^{T}$ in $H_{1}\times H_{2}$,
$$||f(x_{i},y_{i})-f(x_{j},y_{j})||_{H_{1}}\leq K(||x_{i}-x_{j}||_{H_{1}}+||y_{i}-y_{j}||_{H_{2}}),$$
$$||g(x_{i},y_{i})-g(x_{j},y_{j})||_{H_{2}}\leq K(||x_{i}-x_{j}||_{H_{1}}+||y_{i}-y_{j}||_{H_{2}}),$$
where $T$ indicates the transpose of matrix, and nonlinearities $f$ and $g$
$$f:L^{2} (-1,1)\times H_{2}\rightarrow L^{2} (-1,1),$$
$$g:L^{2} (-1,1)\times H_{2}\rightarrow H_{2},$$
with $f(0,0)=g(0,0)=0$ are $C^{1}$-smooth.\\
\textbf{(S3)} With regards to nonlinear parts of (1)-(2), the Lipschitz constant $K$ is such that
$$K<\frac{\lambda_{1}\gamma_{J}}{\gamma_{J}+2\lambda_{1}}.$$
\indent Now let $\Theta_{1}=(\Omega_{1},\mathcal{F}_{1},\mathbb{P}_{1},\theta_{t}^{1})$ and $\Theta_{2}=(\Omega_{2},\mathcal{F}_{2},\mathbb{P}_{2},\theta_{t}^{2})$ are two independent driving (metric) dynamical system as we explained in Section 2. Define
\begin{align*}\Theta=\Theta_{1}\times\Theta_{2}=(\Omega_{1}\times\Omega_{2},\mathcal{F}_{1}\otimes\mathcal{F}_{2},\mathbb{P}_{1}\times\mathbb{P}_{2},(\theta_{t}^{1},\theta_{t}^{2})^{T}),\end{align*}
and $$\theta_{t}\omega:=(\theta_{t}^{1}\omega_{1},\theta_{t}^{2}\omega_{2})^{T}, \mbox{ for } \omega:=(\omega_{1},\omega_{2})^{T}\in \Omega_{1}\times\Omega_{2}:=\Omega.$$
Let $L_{t}^{\alpha_{1}}$ and $L_{t}^{\alpha_{2}}$ for $\alpha_{1}, \alpha_{2}$ in $(1,2)$ be two mutually independent symmetric $\alpha$-stable L$\acute{e}$vy processes in $H_{1}=L^{2}(-1,1)$ and a separable Hilbert space $H_{2}$ with generating triplet $(a_{1},\mathcal{Q}_{1},v_{1})$ and $(a_{2},\mathcal{Q}_{2},v_{2})$.\\
\indent In order to convert stochastic evolutionary system (1)-(2) into a random system, first we prove the existence and uniqueness of solutions for the stochastic system (1)-(2) and the nonlocal Langevin like equation$$d\eta(t)=A_{\alpha}\eta(t)dt+\sigma dL_{t}^{\alpha}.$$\end{linenomath*}
\begin{lemma} Let $L_{t}^{\alpha}$ be a symmetric $\alpha$-stable L$\acute{e}$vy process, then under supposition (S1-S3), nonlocal system (1)-(2) has a unique solution.\end{lemma}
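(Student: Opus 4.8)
The plan is to pass from the stochastic system (1)--(2) to a pathwise random evolution equation by subtracting stationary Ornstein--Uhlenbeck processes, and then to solve the random equation by a contraction argument on mild solutions. \emph{Step 1: the linear nonlocal Langevin equation.} I first show that $\dif\eta(t)=A_\alpha\eta(t)\,\dif t+\sigma\,\dif L_t^\alpha$ has a unique stationary (mild) solution. The natural candidate is the stochastic convolution
\[
\eta(\omega)=\sigma\int_{-\infty}^{0}e^{-A_\alpha s}\,\dif L_s^{\alpha}(\omega),
\]
and its convergence follows from $\|e^{A_\alpha t}\|_1\le Ce^{-\lambda_1 t}$ (Lemma 2.1) together with the tail and moment properties of the $\alpha$-stable process, where the hypothesis $\alpha\in(1,2)$ and the two-sidedness of $L^\alpha$ are used. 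One then checks the cocycle identity $\eta(\theta_t\omega)=e^{A_\alpha t}\eta(\omega)+\sigma\int_0^t e^{A_\alpha(t-s)}\,\dif L_s^\alpha$, so $t\mapsto\eta(\theta_t\omega)$ solves the equation; uniqueness follows from Lemma 2.1, since the difference of two stationary solutions is a stationary solution of the homogeneous equation and is therefore forced to vanish. The same recipe with the rescaled semigroup $e^{A_\alpha t/\epsilon}$ and rescaled noise coefficient produces the stationary solution $\eta^{1}_\epsilon$ of the fast Langevin equation, and the analogous construction run forward from $+\infty$ (using (S1) in place of Lemma 2.1) produces the stationary solution $\eta^{2}$ of $\dif\eta=J\eta\,\dif t+\sigma_2\,\dif L_t^{\alpha_2}$.

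\emph{Step 2: random transformation.} Put $X=x-\eta^{1}_\epsilon$ and $Y=y-\eta^{2}$. A direct computation cancels the stochastic integral exactly, and $(X,Y)$ solves the random evolution equation
\[
\dot X=\tfrac{1}{\epsilon}A_\alpha X+\tfrac{1}{\epsilon}f\big(X+\eta^{1}_\epsilon,\,Y+\eta^{2}\big),\qquad \dot Y=JY+g\big(X+\eta^{1}_\epsilon,\,Y+\eta^{2}\big),
\]
which contains no noise term and whose nonlinearities, by (S2), are globally Lipschitz in $(X,Y)$ with the same constant $K$ and continuous in $\omega$ (because $\omega\mapsto(\eta^{1}_\epsilon,\eta^{2})$ is). Solving (1)--(2) is thus equivalent to solving this random equation, after which $(x,y)=(X+\eta^{1}_\epsilon,\,Y+\eta^{2})$.

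\emph{Step 3: contraction on mild solutions.} Writing the transformed system in mild form,
\[
X(t)=e^{A_\alpha t/\epsilon}X(0)+\tfrac{1}{\epsilon}\int_0^t e^{A_\alpha(t-s)/\epsilon}f\big(X(s)+\eta^{1}_\epsilon,Y(s)+\eta^{2}\big)\,\dif s,
\]
\[
Y(t)=e^{Jt}Y(0)+\int_0^t e^{J(t-s)}g\big(X(s)+\eta^{1}_\epsilon,Y(s)+\eta^{2}\big)\,\dif s,
\]
I set up the map $\mathcal T$ on $C([0,T];\mathbb{H})$ sending $(X,Y)$ to the right-hand side. Using $\|e^{A_\alpha t}\|_1\le Ce^{-\lambda_1 t}$, the finite-time bound on $e^{Jt}$, and the Lipschitz constant $K$ from (S2), one estimates the Lipschitz constant of $\mathcal T$ and makes it strictly less than $1$ by taking $T$ small, so Banach's fixed point theorem yields a unique local mild solution. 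Since the nonlinearities are globally Lipschitz (hence of linear growth), a Gronwall estimate rules out finite-time blow-up and the local solution extends to a unique global one; condition (S3) is the quantitative smallness that will later make the analogous contractions on the half-line (with the appropriate exponential weights) admissible, and it also yields the $T$-uniform bounds one wants for the associated random dynamical system.

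\emph{Main obstacle.} The genuinely delicate step is Step 1: making rigorous sense of the $\alpha$-stable stochastic convolution driving the fast nonlocal equation and verifying that it is a tempered stationary process with the cocycle property. One must control the interplay between the $\epsilon^{-1/\alpha_1}$ scaling of the noise, the $\epsilon^{-1}$ scaling of $A_\alpha$, and the self-similarity of $L^{\alpha_1}$, and establish integrability at $-\infty$ for a process with only finite $p$-th moments, $p<\alpha$, so that $L^2$ isometry arguments are unavailable and one must work with $\alpha$-stable stochastic integrals directly. Once the driving processes are under control, the remainder is a routine semigroup-plus-Lipschitz contraction argument.
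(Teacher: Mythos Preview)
Your argument is correct in outline, but it takes a substantially different route from the paper's. The paper's proof of this lemma is a two-line citation: one rewrites (1)--(2) as a single abstract evolution equation $\dot z = \mathcal{A} z + F(z) + \Sigma\,\dot L$ on $\mathbb{H}$, observes that the block-diagonal operator $\mathcal{A}=\mathrm{diag}(\epsilon^{-1}A_\alpha,\,J)$ generates a $C_0$-semigroup (citing \cite{bai2017slow}), and then invokes the general existence/uniqueness theorem for L\'evy-driven semilinear SPDEs with Lipschitz nonlinearities in Peszat--Zabczyk (\cite{peszat2007stochastic}, p.~170). Only the semigroup property and the Lipschitz condition (S2) are used; (S1) and (S3) play no role here. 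By contrast, you first build the stationary Ornstein--Uhlenbeck processes, then perform the random transformation $(x,y)\mapsto(X,Y)$, and finally run a pathwise Picard iteration on the resulting random evolution equation. This is more labor-intensive but also more self-contained, and it has the virtue of anticipating exactly what the paper does \emph{after} this lemma (your Steps 1--2 are Lemmas~3.2--3.4 and the transformation defining (9)--(10)). One small slip: the stationary solution of $\dif\xi = J\xi\,\dif t + \sigma_2\,\dif L_t^{\alpha_2}$ is built as $\int_{-\infty}^{0} e^{Js}\,\dif L_s^{\alpha_2}$, using that (S1) gives $\|e^{Js}\|\le e^{\gamma_J s}\to 0$ as $s\to -\infty$; it is not ``run forward from $+\infty$''. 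You are also right that (S3) is irrelevant for mere existence and uniqueness---it is the spectral-gap condition needed only for the slow-manifold contractions in Section~4.
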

\begin{proof} Rewrite the system (1)-(2) in the form
\begin{align}\label{sde} \left( {\begin{array}{*{20}{c}}
{\dot{x}}\\
\dot{y}
\end{array}} \right)=
\left( {\begin{array}{*{20}{c}}
{\frac{1}{\epsilon }{A_\alpha }}&0\\
0&J
\end{array}} \right)
\left( {\begin{array}{*{20}{c}}
{x}\\
y
\end{array}} \right)
+\left( {\begin{array}{*{20}{c}}
{\frac{1}{\epsilon}f(x,y)}\\
g(x,y)
\end{array}} \right)
+\left( {\begin{array}{*{20}{c}}
{\frac{\sigma_{1}}{\sqrt[\alpha_{1}]{\epsilon}}\dot{L}_{t}^{\alpha_{1}}}\\
\sigma_{2}\dot{L}_{t}^{\alpha_{2}}
\end{array}} \right)
\end{align}
From \cite{bai2017slow}, it is known that $\left( {\begin{array}{*{20}{c}}
{\frac{1}{\epsilon }{A_\alpha }}&0\\
0&J
\end{array}} \right)$ is an infinitesimal generator of a $C_{0}$-semigroup. Then by (\cite{peszat2007stochastic}, p.170), above stochastic evolutionary system has a unique solution.\end{proof}
\begin{lemma}
Let $L_{t}^{\alpha}$ be a symmetric $\alpha$-stable L$\acute{e}$vy process for $\alpha\in (1,2)$ with generating triplet $(a,\mathcal{Q},v)$. Then the nonlocal stochastic equation
\begin{align}
d\eta(t)=A_{\alpha}\eta(t)dt+\sigma dL_{t}^{\alpha},\mbox{ in } L^{2}(-1,1),\end{align}
where $\eta(0)=\eta_{0}$ and $A_{\alpha}$ is the fractional Laplacian operator, posses the solution
\begin{align*}
\eta(t)=e^{-\lambda_{n} t}\eta_{0}+\sigma\int_{0}^{t}e^{-\lambda_{n}(t-s)}dL_{s}^{\alpha}, \mbox{ for } t\geq0,\mbox{ and }n=1,2,3\cdot\cdot\cdot.\end{align*}\end{lemma}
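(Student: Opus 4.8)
The plan is to diagonalize the nonlocal operator $A_{\alpha}=-(-\Delta)^{\alpha/2}$ and thereby reduce the $L^{2}(-1,1)$-valued linear equation with additive noise to a countable family of scalar Ornstein--Uhlenbeck-type equations, each driven by a one-dimensional symmetric $\alpha$-stable L\'evy process, for which the variation-of-constants formula is classical; the displayed expression is then simply the solution read off on the $n$-th eigenmode. Concretely, I would first invoke the spectral lemma quoted above: on $(-1,1)$ with the exterior Dirichlet condition, $(-\Delta)^{\alpha/2}$ has a complete orthonormal system of eigenfunctions $\{\varphi_{n}\}_{n\geq1}$ in $H_{1}=L^{2}(-1,1)$ with $0<\lambda_{1}<\lambda_{2}\leq\cdots\to\infty$, so that $A_{\alpha}\varphi_{n}=-\lambda_{n}\varphi_{n}$ and $e^{A_{\alpha}t}\varphi_{n}=e^{-\lambda_{n}t}\varphi_{n}$; the sectoriality lemma guarantees that $A_{\alpha}$ generates the analytic $C_{0}$-semigroup $e^{A_{\alpha}t}$ with $\|e^{A_{\alpha}t}\|_{1}\leq Ce^{-\lambda_{1}t}$.

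Next I would expand $\eta(t)=\sum_{n}\eta_{n}(t)\varphi_{n}$, $\eta_{0}=\sum_{n}\langle\eta_{0},\varphi_{n}\rangle_{1}\varphi_{n}$, and let $\beta_{n}=\langle L^{\alpha}_{\cdot},\varphi_{n}\rangle_{1}$ be the real symmetric $\alpha$-stable L\'evy process that is the $n$-th coordinate of $L^{\alpha}$. Projecting the (mild form of the) equation onto $\varphi_{n}$ decouples it into $d\eta_{n}(t)=-\lambda_{n}\eta_{n}(t)\,dt+\sigma\,d\beta_{n}(t)$ with $\eta_{n}(0)=\langle\eta_{0},\varphi_{n}\rangle_{1}$. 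Since $s\mapsto e^{-\lambda_{n}(t-s)}$ is deterministic, continuous and bounded by $1$ on $[0,t]$, hence in $L^{\alpha}([0,t])$, the scalar stochastic integral $\int_{0}^{t}e^{-\lambda_{n}(t-s)}\,d\beta_{n}(s)$ is a well-defined $\alpha$-stable random variable, and an integration-by-parts / It\^o computation (no bracket term appears because $s\mapsto e^{\lambda_{n}s}$ has bounded variation) shows that $\eta_{n}(t)=e^{-\lambda_{n}t}\langle\eta_{0},\varphi_{n}\rangle_{1}+\sigma\int_{0}^{t}e^{-\lambda_{n}(t-s)}\,d\beta_{n}(s)$ is its unique solution. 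Recombining over $n$ yields exactly the stated formula on the $n$-th eigenspace, i.e. $\eta(t)=e^{A_{\alpha}t}\eta_{0}+\sigma\int_{0}^{t}e^{A_{\alpha}(t-s)}\,dL^{\alpha}_{s}$.

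It then remains to check that this candidate is a genuine solution and that it is unique. For existence I would show that the series of mode-wise stochastic convolutions converges to an $H_{1}$-valued process: using the spectral gap $\lambda_{1}>0$ together with the eigenvalue asymptotics $\lambda_{n}\sim\big(\tfrac{n\pi}{2}-\tfrac{(2-\alpha)\pi}{8}\big)^{\alpha}$ from the spectral lemma, one bounds $\sum_{n}\mathbb{E}\big|\eta_{n}(t)-e^{-\lambda_{n}t}\langle\eta_{0},\varphi_{n}\rangle_{1}\big|^{p}$ for a fixed $p\in(1,\alpha)$ (for which $\alpha$-stable laws have finite $p$-th moments), and then passes to a c\`adl\`ag modification of the stochastic convolution by a factorization argument adapted to analytic semigroups. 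A stochastic Fubini theorem then confirms that $\eta$ satisfies the mild formulation, which is the notion of solution in force here (cf. \cite{peszat2007stochastic,applebaum2009levy,duan2015introduction}). Uniqueness is immediate: the difference of two mild solutions solves the deterministic homogeneous problem $\dot z=A_{\alpha}z$, $z(0)=0$, and hence vanishes; alternatively one may simply quote the general existence--uniqueness theorem already used for the full system in the preceding lemma.

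The core algebra in the diagonalization and the scalar variation-of-constants formula are routine; the step that demands genuine care is showing that the mode-wise stochastic convolutions add up to a well-defined $L^{2}(-1,1)$-valued process with c\`adl\`ag paths and that the infinite-dimensional stochastic integral commutes with the semigroup, for an $\alpha$-stable driver with $\alpha\in(1,2)$, where square moments are unavailable so one must argue with $L^{p}$-norms for $p<\alpha$ (or with convergence in probability). This is precisely where the spectral gap and the eigenvalue growth from the spectral lemma are needed, in combination with the standard theory of stochastic integration against Hilbert-space-valued L\'evy processes.
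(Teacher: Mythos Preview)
Your argument is correct and complete, but it takes a genuinely different route from the paper's own proof. You carry out the eigenfunction diagonalization explicitly: project onto each $\varphi_{n}$, solve the scalar $\alpha$-stable Ornstein--Uhlenbeck equation by variation of constants, and then devote real effort to the nontrivial step of reassembling the modes in $L^{2}(-1,1)$ using $L^{p}$-moments with $p<\alpha$ (since second moments are unavailable) together with the eigenvalue growth $\lambda_{n}\sim(n\pi/2)^{\alpha}$. The paper, by contrast, does not attempt any of this convergence analysis. It only records the spectral decomposition, then spends the bulk of its proof on a different point you do not address: it argues from \cite{sato1999levy} that for a symmetric $\alpha$-stable process with $\alpha\in(1,2)$ one has $\int_{|y|>1}|y|\,v(dy)<\infty$, hence finite mean, hence the drift $a$ in the generating triplet vanishes; with $a=0$ the L\'evy exponent reduces to the form needed to invoke the general existence result in \cite[p.~143]{peszat2007stochastic}, which delivers the mild solution directly as a black box.

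So the trade-off is this: your proof is self-contained and makes the analytical content (mode-wise well-posedness, series convergence, c\`adl\`ag modification) visible, at the cost of length; the paper's proof is short but outsources the hard analysis to Peszat--Zabczyk and instead isolates the one structural hypothesis ($a=0$) that needs checking before that theorem applies. Both are valid; if you want your version to align more closely with the paper's logic, you could replace your convergence/factorization paragraph with a single citation to \cite[p.~143]{peszat2007stochastic} once you have noted that the symmetric $\alpha$-stable driver has zero drift.
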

\begin{proof}
From \cite{bostan2013map}, it is known that fractional Laplacian is linear self-adjoint operator. By \cite{kwasnicki2012eigenvalues}, we obtain that there exist an infinite sequence of eigenvalues $\{\lambda_{n}\}$ such that
$$0<\lambda_{1}<\lambda_{2}\leq \lambda_{3}\leq\cdot\cdot\cdot\leq\lambda_{n}\leq \cdot\cdot\cdot, \mbox{ for } n=1,2,3,\cdot\cdot\cdot.$$ and the corresponding eigenfunctions $\varphi_{n}$ form a complete orthonormal set in $L^{2}(-1,1)$ such that $$-(-\Delta)^{\alpha/2}\varphi_{n}=-\lambda_{n}\varphi_{n}.$$ Since $L_{t}^{\alpha}$, $\alpha\in (1,2)$ is a symmetric $\alpha$-stable L$\acute{e}$vy process with exponent $\mathbb{E}e^{i\eta L_{t}^{\alpha}}=e^{-t\psi_{t}(\eta)}$. Here
\begin{align*}\psi_{t}(\eta)=&-i\langle a,\eta\rangle_{L^{2}(-1,1)}+\frac{1}{2}\langle \mathcal{Q}\eta,\eta\rangle_{L^{2}(-1,1)}+\int_{L^{2}(-1,1)}(1-e^{i\langle \eta,y\rangle_{L^{2}(-1,1)}}\\&+i\langle\eta,y\rangle_{L^{2}(-1,1)}c(y))v(dy),\mbox{ with }c(y)=1.\end{align*} From (\cite{sato1999levy}, p.80) it is obtained that
$\alpha\in(1,2)$ if and only if $\int_{|y|>1}|y|v(dy)<\infty$,\\
and by using of (\cite{sato1999levy}, p.163), we get that
$\int_{|y|>1}|y|v(dy)<\infty$ if and only if $L_{t}^{\alpha}$ has finite mean. Finally with  the help of (\cite{sato1999levy}, p.39) we have that
if $\int_{|y|>1}|y|v(dy)<\infty$, then center and mean are identical. Since symmetric $\alpha$-stable L$\acute{e}$vy process for $1<\alpha<2$ has zero mean, so its center $a$ is also zero. Hence
\begin{align*}\psi_{t}(\eta)=&\frac{1}{2}\langle \mathcal{Q}\eta,\eta\rangle_{L^{2}(-1,1)}+\int_{L^{2}(-1,1)}(1-e^{i\langle \eta,y\rangle_{L^{2}(-1,1)}})v(dy).\end{align*}
Then by (\cite{peszat2007stochastic}, p.143) above equation (5) has following solution
\begin{align*}
\eta(t)=e^{-\lambda_{n} t}\eta_{0}+\sigma\int_{0}^{t}e^{-\lambda_{n}(t-s)}dL_{s}^{\alpha}, \mbox{ for } t\geq0,\mbox{ and }n=1,2,3\cdot\cdot\cdot.\end{align*}\end{proof}
\begin{lemma} For a fixed $\epsilon>0$, the equations
\begin{align}
d\eta(t)=\frac{1}{\epsilon}A_{\alpha}\eta(t) dt+\frac{\sigma_{1}}{\sqrt[\alpha_{1}]{\epsilon}}dL_{t}^{\alpha_{1}},\indent \eta(0)=\eta_{0},\end{align}
\begin{align}
d\delta(t)=A_{\alpha}\delta(t) dt+\sigma_{1}dL_{t}^{\alpha_{1}},\indent \delta(0)=\delta_{0},\end{align}
have c$\grave{a}$dl$\grave{a}$g stationary solutions $\sigma_{1}\eta^{\epsilon}(\theta_{t}^{1}\omega_{1})$ and $\sigma_{1}\delta(\theta_{t}^{1}\omega_{1})$ through random variables
$\sigma_{1}\eta^{\epsilon}(\omega_{1})=\frac{\sigma_{1}}{\sqrt[\alpha_{1}]{\epsilon}}\int_{-\infty}^{0}e^{\frac{\lambda_{n}s}{\epsilon}}dL_{s}^{\alpha_{1}}(\omega_{1})$ and
$\sigma_{1}\delta(\omega_{1})=\sigma_{1}\int_{-\infty}^{0}e^{\lambda_{n}s}dL_{s}^{\alpha_{1}}(\omega_{1})$ respectively.\end{lemma}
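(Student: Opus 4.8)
The plan is to construct the stationary solution of \eqref{sde}-type Ornstein–Uhlenbeck equations driven by stable L\'evy noise by the standard "variation of constants plus stationarity via the metric dynamical system" argument, treating \eqref{} and \eqref{} essentially in parallel (the second being the case $\epsilon=1$ after rescaling the coefficient). First I would write down the candidate stationary orbit explicitly: for \eqref{}, set
\begin{align*}
\sigma_1\eta^\epsilon(\omega_1)=\frac{\sigma_1}{\sqrt[\alpha_1]{\epsilon}}\int_{-\infty}^{0}e^{\frac{\lambda_n s}{\epsilon}}\,dL_s^{\alpha_1}(\omega_1),
\end{align*}
and analogously for $\delta$ with $\epsilon=1$. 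The first task is to show this improper stochastic integral converges (in probability, and pathwise as a well-defined c\`adl\`ag object): here I would invoke the finite-mean property established in Lemma~2.7 — namely that for $\alpha_1\in(1,2)$ one has $\int_{|y|>1}|y|\,v(dy)<\infty$, so $L_t^{\alpha_1}$ has zero mean — together with the exponential weight $e^{\lambda_n s/\epsilon}$, whose integrability against the L\'evy measure on $(-\infty,0]$ follows because $\lambda_n>0$ (Lemma~2.2). A Kolmogorov-type or martingale-convergence estimate on $\int_{-T}^{0}e^{\lambda_n s/\epsilon}\,dL_s^{\alpha_1}$ as $T\to\infty$ gives the limit; the symmetry of the noise makes the centering harmless.

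Next I would verify the stationarity identity $\phi(t,\omega_1,\sigma_1\eta^\epsilon(\omega_1))=\sigma_1\eta^\epsilon(\theta_t^1\omega_1)$ for all $t\ge 0$ a.s. The key computation is the cocycle / shift identity for the noise: using $\theta_t^1\omega_1(\cdot)=\omega_1(\cdot+t)-\omega_1(t)$ one rewrites
\begin{align*}
\int_{-\infty}^{0}e^{\frac{\lambda_n s}{\epsilon}}\,dL_s^{\alpha_1}(\theta_t^1\omega_1)
=e^{-\frac{\lambda_n t}{\epsilon}}\Big(\int_{-\infty}^{0}e^{\frac{\lambda_n r}{\epsilon}}\,dL_r^{\alpha_1}(\omega_1)\Big)+\int_0^t e^{\frac{\lambda_n (s-t)}{\epsilon}}\,dL_s^{\alpha_1}(\omega_1),
\end{align*}
which is exactly the variation-of-constants formula (Lemma~2.6, with generator $\frac1\epsilon A_\alpha$ acting as $-\lambda_n/\epsilon$ on the $n$-th eigenmode $\varphi_n$) applied to the initial datum $\eta_0=\sigma_1\eta^\epsilon(\omega_1)$. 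Reading this componentwise over the orthonormal eigenbasis $\{\varphi_n\}$ of $A_\alpha$ from Lemma~2.2 upgrades the scalar statement to the $L^2(-1,1)$-valued one. The c\`adl\`ag property of $t\mapsto\sigma_1\eta^\epsilon(\theta_t^1\omega_1)$ is inherited from the c\`adl\`ag paths of $L^{\alpha_1}$ through the convolution representation, and measurability in $\omega_1$ is automatic from the construction of the stochastic integral.

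The main obstacle I anticipate is the convergence of the improper integral over the infinite horizon $(-\infty,0]$ in the genuinely infinite-dimensional, non-Gaussian setting: one cannot use $L^2$-isometry as for Wiener integrals, so I would control the tail either via the L\'evy–Itô decomposition (splitting the compound-Poisson large jumps, where the exponential weight plus $\int_{|y|>1}|y|\,v(dy)<\infty$ gives an absolutely convergent sum, from the $L^2$-martingale small-jump part, where Doob's inequality applies) or by citing the corresponding existence result for stable-driven O–U processes in \cite{yuan2017slow} and adapting it to the rescaled coefficient $\frac1\epsilon A_\alpha$. A secondary but routine point is checking that the $\sqrt[\alpha_1]{\epsilon}$ scaling in \eqref{} is precisely the one that makes $\sigma_1\eta^\epsilon$ solve the $\epsilon$-scaled equation — this is the self-similarity $L^{\alpha_1}_{ct}\overset{d}{=}c^{1/\alpha_1}L^{\alpha_1}_t$ of stable processes, applied to the time-change $s\mapsto s/\epsilon$ — after which uniqueness of the stationary solution follows from the strict contraction $e^{-\lambda_n t/\epsilon}\to 0$ exactly as in the Gaussian case.
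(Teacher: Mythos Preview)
Your core argument --- write the variation-of-constants solution, plug in the candidate random initial datum, and verify the stationarity identity $\phi(t,\omega_1,\sigma_1\delta(\omega_1))=\sigma_1\delta(\theta_t^1\omega_1)$ via the shift relation $\theta_t^1\omega_1(\cdot)=\omega_1(\cdot+t)-\omega_1(t)$ --- is exactly what the paper does. The paper's proof is considerably terser: it simply writes out
\[
\phi(t,\omega_1,\sigma_1\delta(\omega_1))=\sigma_1\int_{-\infty}^{t}e^{-\lambda_n(t-s)}\,dL_s^{\alpha_1}(\omega_1)=\sigma_1\delta(\theta_t^1\omega_1)
\]
line by line and then says ``similarly'' for the $\epsilon$-scaled equation, without addressing the convergence of the improper integral, the c\`adl\`ag modification, or the eigenbasis reduction that you (rightly) flag as the genuine analytic content. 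So your proposal is more careful than the paper on exactly the points where care is needed.

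One small misplacement: the self-similarity $L^{\alpha_1}_{ct}\overset{d}{=}c^{1/\alpha_1}L^{\alpha_1}_t$ is \emph{not} needed to check that $\sigma_1\eta^\epsilon$ solves the $\epsilon$-scaled equation --- that is pure variation of constants with the given coefficient $\sigma_1/\sqrt[\alpha_1]{\epsilon}$. The self-similarity enters only in the \emph{next} lemma (Lemma~3.6), where one shows $\eta^\epsilon(\theta_t^1\omega_1)\overset{d}{=}\delta(\theta_{t/\epsilon}^1\omega_1)$. You can drop that remark from this proof without loss.
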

\begin{proof} The equation (7) has unique c$\grave{a}$dl$\grave{a}$g solution
\begin{align*}
\phi(t,\omega_{1},\delta_{0})=e^{-\lambda_{n}t}\delta_{0}+\sigma_{1}\int_{0}^{t}e^{-\lambda_{n}(t-s)}dL_{s}^{\alpha_{1}}(\omega_{1})\end{align*}
It follows that
\begin{align*}
\phi(t,\omega_{1},\sigma_{1}\delta(\omega_{1}))&=\sigma_{1}e^{-\lambda_{n}t}\delta(\omega_{1})+\sigma_{1}\int_{0}^{t}e^{-\lambda_{n}(t-s)}dL_{s}^{\alpha_{1}}(\omega_{1})
\\&=\sigma_{1}e^{-\lambda_{n}t}\int_{-\infty}^{0}e^{\lambda_{n}s}dL_{s}^{\alpha_{1}}(\omega_{1})+\sigma_{1}\int_{0}^{t}e^{-\lambda_{n}(t-s)}dL_{s}^{\alpha_{1}}(\omega_{1})
\\&=\sigma_{1}\int_{-\infty}^{0}e^{-\lambda_{n}(t-s)}dL_{s}^{\alpha_{1}}(\omega_{1})+\sigma_{1}\int_{0}^{t}e^{-\lambda_{n}(t-s)}dL_{s}^{\alpha_{1}}(\omega_{1})
\\&=\sigma_{1}\int_{-\infty}^{t}e^{-\lambda_{n}(t-s)}dL_{s}^{\alpha_{1}}(\omega_{1}),
\end{align*}
and
\begin{align*}
\sigma_{1}\delta(\theta_{t}^{1}\omega_{1})&=\sigma_{1}\int_{-\infty}^{0}e^{\lambda_{n}s}dL_{s}^{\alpha_{1}}(\theta_{t}^{1}\omega_{1})
\\&=\sigma_{1}\int_{-\infty}^{0}e^{\lambda_{n}s}d(L_{t+s}^{\alpha_{1}}(\omega_{1})-L_{t}^{\alpha_{1}}(\omega_{1}))
\\&=\sigma_{1}\int_{-\infty}^{0}e^{\lambda_{n}s}dL_{t+s}^{\alpha_{1}}(\omega_{1})
=\sigma_{1}\int_{-\infty}^{t}e^{-\lambda_{n}(t-s)}dL_{s}^{\alpha_{1}}(\omega_{1}).
\end{align*}
Hence $\phi(t,\omega_{1},\sigma_{1}\delta(\omega_{1}))=\sigma_{1}\delta(\theta_{t}^{1}\omega_{1})$ is the stationary solution for (7).\\
Similarly (6) has c$\grave{a}$dl$\grave{a}$g stationary solution
\begin{align*}
\sigma_{1}\eta^{\epsilon}(\theta_{t}^{1}\omega_{1})=\frac{\sigma_{1}}{\sqrt[\alpha_{1}]{\epsilon}}\int_{-\infty}^{t}e^{\frac{-\lambda_{n}(t-s)}{\epsilon}}dL_{s}^{\alpha_{1}}(\omega_{1}).
\end{align*}
\end{proof}
\begin{lemma} \cite{yuan2017slow} Similarly the stochastic equation
\begin{align}
d\xi(t)=J\xi(t) dt+\sigma_{2}dL_{t}^{\alpha_{2}},\indent \xi(0)=\xi_{0},\end{align}
has c$\grave{a}$dl$\grave{a}$g stationary solution $\sigma_{2}\xi(\theta_{t}^{2}\omega_{2})$ through random variable $$\sigma_{2}\xi(\omega_{2})=\sigma_{2}\int_{-\infty}^{0}e^{Js}dL_{s}^{\alpha_{2}}(\omega_{2}).$$\end{lemma}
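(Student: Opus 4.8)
The plan is to reproduce, almost word for word, the argument just carried out for the fast Ornstein--Uhlenbeck-type equations, with the eigenvalue estimate $\|e^{A_{\alpha}t}\|_{1}\le Ce^{-\lambda_{1}t}$ replaced by the exponential bound (S1) for the semigroup generated by $J$. First I would record the mild (variation-of-constants) form of the linear L$\acute{e}$vy-driven equation: since $J$ generates a $C_{0}$-semigroup on $H_{2}$ and the noise coefficient $\sigma_{2}$ is deterministic, for each $\xi_{0}\in H_{2}$ the equation $d\xi(t)=J\xi(t)\,dt+\sigma_{2}\,dL_{t}^{\alpha_{2}}$ has a unique c$\grave{a}$dl$\grave{a}$g solution $\phi(t,\omega_{2},\xi_{0})=e^{Jt}\xi_{0}+\sigma_{2}\int_{0}^{t}e^{J(t-s)}\,dL_{s}^{\alpha_{2}}(\omega_{2})$, by the same references (\cite{peszat2007stochastic}) invoked for the $\delta$-equation above; this $\phi$ is the cocycle on $H_{2}$.

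The second step is to verify that $\sigma_{2}\xi(\omega_{2})$ is a bona fide $H_{2}$-valued random variable, i.e. that the improper integral $\sigma_{2}\int_{-\infty}^{0}e^{Js}\,dL_{s}^{\alpha_{2}}(\omega_{2})$ converges almost surely. The bound (S1) gives the exponential damping $\|e^{Js}\|_{2}\le e^{\gamma_{J}s}$ for $s\le 0$, so the integrand is exponentially small as $s\to-\infty$; then either by the stable-integrability criterion $\int_{-\infty}^{0}\|e^{Js}\|_{2}^{\alpha_{2}}\,ds<\infty$ (immediate from (S1)), or, following the route of the preceding lemma, by using that $L^{\alpha_{2}}$ has finite first moment for $\alpha_{2}\in(1,2)$ (proved just above through \cite{sato1999levy}) together with a dominated-convergence argument, the truncations $\int_{-T}^{0}$ form an a.s. Cauchy family as $T\to\infty$, so the limit exists, and $t\mapsto\sigma_{2}\xi(\theta_{t}^{2}\omega_{2})$ inherits the c$\grave{a}$dl$\grave{a}$g regularity of the sample path.

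The third and decisive step is the stationarity identity $\phi(t,\omega_{2},\sigma_{2}\xi(\omega_{2}))=\sigma_{2}\xi(\theta_{t}^{2}\omega_{2})$ for every $t\ge 0$, obtained exactly as for $\delta$. On one side, inserting $\xi_{0}=\sigma_{2}\xi(\omega_{2})$ into the cocycle formula, pulling $e^{Jt}$ inside the integral over $(-\infty,0]$ — here one uses the semigroup factor in the orientation that recombines with the stochastic convolution, in the same way $e^{\lambda_{n}s}=e^{-A_{\alpha}s}$ is used above — and merging with the integral over $[0,t]$ collapses everything into a single improper integral of $e^{J(t-\cdot)}$ over $(-\infty,t]$. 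On the other side, since $\theta_{t}^{2}$ acts on $\omega_{2}$ only by the translation--recentring $\omega_{2}(\cdot)\mapsto\omega_{2}(\cdot+t)-\omega_{2}(t)$, it merely re-indexes the increments of $L^{\alpha_{2}}$, so after the substitution $r=s+t$ the quantity $\sigma_{2}\xi(\theta_{t}^{2}\omega_{2})$ equals that same improper integral; matching the two sides closes the argument and shows $\sigma_{2}\xi(\theta_{t}^{2}\omega_{2})$ is the claimed c$\grave{a}$dl$\grave{a}$g stationary solution.

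I expect the only genuine difficulty to lie in the analytic bookkeeping for the $\alpha$-stable stochastic integral in the infinite-dimensional space $H_{2}$: stable integrals possess no second moment, so there is no $L^{2}$-isometry to lean on, and the convergence of $\int_{-\infty}^{0}e^{Js}\,dL_{s}^{\alpha_{2}}$, the interchange of $e^{Jt}$ with the integral, and the change of variables under $\theta_{t}^{2}$ each have to be justified through the finite-mean estimate valid when $\alpha_{2}>1$ together with the exponential bound (S1), rather than by routine Hilbert-space computations. All remaining details are identical to the preceding lemma and to \cite{yuan2017slow}, to which I would simply refer for the routine parts.
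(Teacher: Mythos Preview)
Your proposal is correct and matches the paper's intent: the paper gives no proof at all for this lemma, merely citing \cite{yuan2017slow} and flagging with the word ``Similarly'' that the argument is a verbatim repetition of the preceding Lemma~3.3, which is precisely the route you outline. Your additional care about convergence of the infinite-dimensional $\alpha$-stable integral and the absence of an $L^{2}$-isometry goes beyond anything the paper itself supplies.
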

\begin{remark} (\cite{duan2015introduction}, p.191) $L_{ct}^{\alpha}$ and $c^{\frac{1}{\alpha}}L_{t}^{\alpha}$ have the same distribution for every $c>0$, i.e., $$L_{ct}^{\alpha}\overset{d}{=} c^{\frac{1}{\alpha}}L_{t}^{\alpha}, \mbox{ for every }c>0.$$\end{remark}
\begin{lemma} The process $\sigma_{1}\eta^{\epsilon}(\theta_{t}^{1}\omega_{1})$ has the same distribution as the process $\sigma_{1}\delta(\theta_{\frac{t}{\epsilon}}^{1}\omega_{1})$, where $\eta^{\epsilon}$ and $\delta$ are given in previous Lemma 3.3.\end{lemma}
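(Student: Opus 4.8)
The plan is to reduce the claimed identity in law to the self-similarity of the $\alpha$-stable L$\acute{e}$vy motion recorded in the preceding Remark, by means of an explicit time change inside the stochastic convolution. From Lemma 3.3 one has, for each $t$,
\begin{align*}
\sigma_{1}\eta^{\epsilon}(\theta_{t}^{1}\omega_{1})&=\frac{\sigma_{1}}{\sqrt[\alpha_{1}]{\epsilon}}\int_{-\infty}^{t}e^{\frac{-\lambda_{n}(t-s)}{\epsilon}}\,dL_{s}^{\alpha_{1}}(\omega_{1}),\\
\sigma_{1}\delta(\theta_{t/\epsilon}^{1}\omega_{1})&=\sigma_{1}\int_{-\infty}^{t/\epsilon}e^{-\lambda_{n}(t/\epsilon-s)}\,dL_{s}^{\alpha_{1}}(\omega_{1}).
\end{align*}
The first step is the substitution $s=r/\epsilon$ in the second convolution: as $s$ runs over $(-\infty,t/\epsilon)$ the variable $r=\epsilon s$ runs over $(-\infty,t)$ and $t/\epsilon-s=(t-r)/\epsilon$, whence
\[
\sigma_{1}\delta(\theta_{t/\epsilon}^{1}\omega_{1})=\sigma_{1}\int_{-\infty}^{t}e^{\frac{-\lambda_{n}(t-r)}{\epsilon}}\,dL_{r/\epsilon}^{\alpha_{1}}(\omega_{1}).
\]

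The key step is then to apply the scaling relation $L_{ct}^{\alpha_{1}}\overset{d}{=}c^{1/\alpha_{1}}L_{t}^{\alpha_{1}}$ with $c=1/\epsilon$: the time-changed integrator $\{L_{r/\epsilon}^{\alpha_{1}}\}_{r}$ has the same law, as a process, as $\{\epsilon^{-1/\alpha_{1}}L_{r}^{\alpha_{1}}\}_{r}$. Since, for the fixed deterministic kernel $e^{-\lambda_{n}(t-r)/\epsilon}$, the map "c$\grave{a}$dl$\grave{a}$g path $M\mapsto\big(t\mapsto\int_{-\infty}^{t}e^{-\lambda_{n}(t-r)/\epsilon}\,dM_{r}\big)$" is a measurable functional of the driving path, equality in law of the two integrators transfers to equality in law of the two convolution processes; pulling the deterministic constant $\epsilon^{-1/\alpha_{1}}$ out of the integral turns the right-hand side into $\frac{\sigma_{1}}{\sqrt[\alpha_{1}]{\epsilon}}\int_{-\infty}^{t}e^{-\lambda_{n}(t-r)/\epsilon}\,dL_{r}^{\alpha_{1}}(\omega_{1})=\sigma_{1}\eta^{\epsilon}(\theta_{t}^{1}\omega_{1})$, which is the assertion.

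I expect the main obstacle to be making this transfer fully rigorous for a discontinuous integrator: one has to justify the change of variables inside an integral driven by a c$\grave{a}$dl$\grave{a}$g L$\acute{e}$vy process and verify that the stochastic convolution is a suitably measurable functional of the path that is stable under the distributional identification. A clean route is to work first on finite windows $[-T,t]$, approximate the integral by Riemann--Stieltjes sums, use the independent-increment and self-similarity structure of $L^{\alpha_{1}}$ at the level of these finite sums, then let $T\to\infty$, the improper integral converging almost surely because $\lambda_{n}>0$ and $\alpha_{1}\in(1,2)$ (the finite-mean regime used in Lemma 3.2). Finally one should verify the agreement of the full finite-dimensional distributions in $t$, not merely of a one-time marginal. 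The remaining manipulations are the routine change of variables displayed above.
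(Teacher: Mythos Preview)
Your proof is correct and follows essentially the same route as the paper: a change of variables in the stochastic convolution combined with the self-similarity $L_{ct}^{\alpha_{1}}\overset{d}{=}c^{1/\alpha_{1}}L_{t}^{\alpha_{1}}$ from the preceding Remark. The only cosmetic difference is that the paper starts from $\eta^{\epsilon}(\theta_{t}^{1}\omega_{1})$, substitutes $s=\epsilon r$ to land on $\int_{-\infty}^{t/\epsilon}e^{-\lambda_{n}(t/\epsilon-r)}\,\epsilon^{-1/\alpha_{1}}dL_{\epsilon r}^{\alpha_{1}}$, and then applies self-similarity with $c=\epsilon$, whereas you run the same computation from the $\delta$ side with the inverse substitution and $c=1/\epsilon$; the paper also omits the measurability and approximation justifications you sketch.
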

\begin{proof} From Lemma 3.3,
\begin{align*}
\eta^{\epsilon}(\theta_{t}^{1}\omega_{1})&=\frac{1}{\sqrt[\alpha_{1}]{\epsilon}} \int_{-\infty}^{t}e^{\frac{-\lambda_{n}(t-s)}{\epsilon}}dL_{s}^{\alpha_{1}}(\omega_{1})=\int_{-\infty}^{\frac{t}{\epsilon}}e^{-\lambda_{n}(\frac{t}{\epsilon}-r)}\left(\frac{1}{\sqrt[\alpha_{1}]{\epsilon}} dL_{\epsilon r}^{\alpha_{1}}(\omega_{1})\right)\\&\overset{d}{=}\int_{-\infty}^{\frac{t}{\epsilon}}e^{-\lambda_{n}(\frac{t}{\epsilon}-r)} dL_{r}^{\alpha_{1}}(\omega_{1})=\delta(\theta_{\frac{t}{\epsilon}}^{1}\omega_{1}).
\end{align*}
Hence the process $\sigma_{1}\eta^{\epsilon}(\theta_{t}^{1}\omega_{1})$ and the process $\sigma_{1}\delta(\theta_{\frac{t}{\epsilon}}^{1}\omega_{1})$ have the same distribution.\end{proof}
\indent Define a random transformation
 $$\binom{X}{Y}:=\nu(\omega,x,y)=\binom{x-\sigma_{1}\eta^{\epsilon}(\omega_{1})}{{y-\sigma_{2}\xi(\omega_{2})}},$$
 then $(X(t),Y(t))=\nu(\theta_{t}\omega,x,y)$ satisfies the random system
 \begin{align}
 dX&=\frac{1}{\epsilon}A_{\alpha}Xdt+\frac{1}{\epsilon}f(X+\sigma_{1}\eta^{\epsilon}(\theta_{t}^{1}\omega_{1}),Y+\sigma_{2}\xi(\theta_{t}^{2}\omega_{2}))dt,\\
 dY&=JYdt+g(X+\sigma_{1}\eta^{\epsilon}(\theta_{t}^{1}\omega_{1}),Y+\sigma_{2}\xi(\theta_{t}^{2}\omega_{2}))dt.
 \end{align}
 Here the additional terms $\sigma_{1}\eta^{\epsilon}(\theta_{t}^{1}\omega_{1})$ and $\sigma_{2}\xi(\theta_{t}^{2}\omega_{2})$ does not change the Lipschitz constant of nonlinearities $f$ and $g$. So $f$ and $g$ in random dynamical system (9)-(10) and in stochastic dynamical system (1)-(2) have the same Lipschitz constant.
The random system (9)-(10) can be solved for any $\omega\in \Omega$ and for any initial value $(X(0),Y(0))^{T}=(X_{0},Y_{0})^{T}$, then the solution operator
\begin{align*}
(t,\omega,(X_{0},Y_{0})^{T})\mapsto\Phi(t,\omega,(X_{0},Y_{0})^{T})=(X(t,\omega,(X_{0},Y_{0})^{T}),Y(t,\omega,(X_{0},Y_{0})^{T}))^{T},\end{align*}
defines the random dynamical system for (9)-(10). Furthermore,
\begin{align*}
\phi(t,\omega,(X_{0},Y_{0})^{T})=\Phi(t,\omega,(X_{0},Y_{0})^{T})+(\sigma_{1}\eta^{\epsilon}(\theta_{t}^{1}\omega_{1}),\sigma_{2}\xi(\theta_{t}^{2}\omega_{2}))^{T},
\end{align*}
 defines the random dynamical system for (1)-(2).
\section{Random slow manifolds}
We define Banach spaces consist of functions for exploring the random system (9)-(10). For any $\beta \in \tilde{K}\subset\mathbb{R}$:
\begin{align*}
C_{\beta}^{H_{1},-}&=\{\Phi:(-\infty,0]\rightarrow L^{2}(-1,1)\mbox{ is continuous and } \mathop {\mbox{sup} }\limits_{t\in(-\infty,0]}||e^{-\beta t}\Phi(t)||_{1}<\infty\},
\\C_{\beta}^{H_{1},+}&=\{\Phi:[0,\infty)\rightarrow L^{2}(-1,1)\mbox{ is continuous and } \mathop {\mbox{sup} }\limits_{t\in[0,\infty)}||e^{-\beta t}\Phi(t)||_{1}<\infty\},
\end{align*}
having norms
\begin{align*}||\Phi||_{C_{\beta}^{H_{1},-}}=\mathop {\mbox{sup} }\limits_{t\in(-\infty,0]}||e^{-\beta t}\Phi(t)||_{1}, \mbox{ and } ||\Phi||_{C_{\beta}^{H_{1},+}}=\mathop {\mbox{sup} }\limits_{t\in[0,\infty)}||e^{-\beta t}\Phi(t)||_{1}.\end{align*}
Similarly, define
\begin{align*}
C_{\beta}^{H_{2},-}&=\{\Phi:(-\infty,0]\rightarrow H_{2}\mbox{ is continuous and } \mathop {\mbox{sup} }\limits_{t\in(-\infty,0]}||e^{-\beta t}\Phi(t)||_{2}<\infty\},
\\C_{\beta}^{H_{2},+}&=\{\Phi:[0,\infty)\rightarrow H_{2}\mbox{ is continuous and } \mathop {\mbox{sup} }\limits_{t\in[0,\infty)}||e^{-\beta t}\Phi(t)||_{2}<\infty\},
\end{align*}
having norms
\begin{align*}||\Phi||_{C_{\beta}^{H_{2},-}}=\mathop {\mbox{sup} }\limits_{t\in(-\infty,0]}||e^{-\beta t}\Phi(t)||_{2}, \mbox{ and } ||\Phi||_{C_{\beta}^{H_{2},+}}=\mathop {\mbox{sup} }\limits_{t\in[0,\infty)}||e^{-\beta t}\Phi(t)||_{2}.\end{align*}
Let $C_{\beta}^{\pm}$ be the product of Banach spaces $C_{\beta}^{\pm}:=C_{\beta}^{H_{1},\pm}\times C_{\beta}^{H_{2},\pm}$,
having norm
\begin{align*}||Z||_{C_{\beta}^{\pm}}=||X||_{C_{\beta}^{H_{1},\pm}}+||Y||_{C_{\beta}^{H_{2},\pm}},\indent Z=(X,Y)^{T}\in C_{\beta}^{\pm}.\end{align*}
\indent Assume that $0<\gamma<1$ be a number satisfying the property
\begin{align}\label{sde}K<\gamma\lambda_{1}<\lambda_{1} \mbox{ and }-\gamma+\lambda_{1}>K.\end{align}
For convenience, we may consider  $$\gamma=\frac{\gamma_{J}}{2\lambda_{1}+\gamma_{J}}.$$
Let's define
$$\mathcal{M}^{\epsilon}(\omega)\triangleq\{Z_{0}\in \mathbb{H}:Z(t,\omega,Z_{0})\in C_{\beta}^{-}\},\mbox{ with }\beta=-\frac{\gamma}{\epsilon}.$$
Next, we will prove that $\mathcal{M}^{\epsilon}(\omega)$ is an invariant manifold by using of Lyapunov-Perron method.
\begin{lemma} Let $Z(\cdot,\omega)=(X(\cdot,\omega),Y(\cdot,\omega))^{T}$ in $C_{\beta}^{-}$. Then $Z(t,\omega)$ is the solution of (9)-(10) with initial value $Z_{0}=(X_{0},Y_{0})^{T}$ iff $Z(t,\omega)$ satisfies
\begin{align*}&\binom{X(t)}{Y(t)}=\binom{\frac{1}{\epsilon}\int_{-\infty}^{t}e^{A_{\alpha}(t-s)/\epsilon}f(X(s)+\sigma_{1}\eta^{\epsilon}(\theta_{s}^{1}\omega_{1}),Y(s)+\sigma_{2}\xi(\theta_{s}^{2}\omega_{2}))ds}{e^{Jt}Y_{0}+\int_{0}^{t}e^{J(t-s)}g(X(s)+\sigma_{1}\eta^{\epsilon}(\theta_{s}^{1}\omega_{1}),Y(s)+\sigma_{2}\xi(\theta_{s}^{2}\omega_{2}))ds}.\end{align*}\end{lemma}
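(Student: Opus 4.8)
The plan is to prove the equivalence by the variation-of-constants (Duhamel) representation, handling the fast component $X$ and the slow component $Y$ separately, since the linear parts $\tfrac1\epsilon A_\alpha$ and $J$ enter with opposite signs: $A_\alpha$ is dissipative (Lemma 2.1) so the $X$-equation is integrated from $-\infty$, while $J$ only satisfies the backward bound (S1) so the $Y$-equation keeps its value at $t=0$. Throughout I will use that, for $Z\in C_\beta^-$ with $\beta=-\gamma/\epsilon$, one has the pointwise growth bound $\|Z(t)\|_{\mathbb H}\le\|Z\|_{C_\beta^-}e^{-\gamma t/\epsilon}$ for $t\le0$, and that the stationary orbits $\sigma_1\eta^\epsilon(\theta^1_t\omega_1)$, $\sigma_2\xi(\theta^2_t\omega_2)$ from Lemmas 3.3--3.4 are tempered, i.e.\ grow sub-exponentially along $\mathbb P$-a.e.\ path.

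For necessity, suppose $Z(\cdot,\omega)=(X,Y)^T\in C_\beta^-$ solves (9)--(10) with $Z(0)=Z_0$. For the slow equation, the constant-variation formula (valid since (S1) gives a well-defined $e^{Jt}$ for $t\le0$) yields directly $Y(t)=e^{Jt}Y_0+\int_0^t e^{J(t-s)}g(\cdots)\,ds$ with no limiting argument. For the fast equation, on a finite interval $[r,t]$ with $r<t\le0$ the constant-variation formula gives
\[
X(t)=e^{A_\alpha(t-r)/\epsilon}X(r)+\frac{1}{\epsilon}\int_r^t e^{A_\alpha(t-s)/\epsilon}f\bigl(X(s)+\sigma_1\eta^\epsilon(\theta^1_s\omega_1),\,Y(s)+\sigma_2\xi(\theta^2_s\omega_2)\bigr)\,ds .
\]
I would then let $r\to-\infty$. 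By Lemma 2.1, $\|e^{A_\alpha(t-r)/\epsilon}X(r)\|_1\le Ce^{-\lambda_1(t-r)/\epsilon}\|X(r)\|_1\le C\|X\|_{C_\beta^{H_1,-}}e^{-\lambda_1 t/\epsilon}e^{(\lambda_1-\gamma)r/\epsilon}$, which tends to $0$ because $\lambda_1-\gamma>0$ by the standing assumptions $K<\gamma\lambda_1<\lambda_1$ and $\lambda_1-\gamma>K$. For the integral term, (S2) with $f(0,0)=0$ bounds $\|f(\cdots)\|_1$ by $K$ times $\|X(s)\|_1+\|Y(s)\|_2$ (growing at most like $e^{-\gamma s/\epsilon}$) plus the tempered noise terms; after multiplying by $\|e^{A_\alpha(t-s)/\epsilon}\|_1\le Ce^{-\lambda_1(t-s)/\epsilon}$ the integrand is dominated by $e^{(\lambda_1-\gamma)s/\epsilon}$ times a sub-exponential factor, hence integrable on $(-\infty,t]$. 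Passing to the limit produces exactly the stated identity for $X(t)$.

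For sufficiency, assume $Z(\cdot,\omega)\in C_\beta^-$ satisfies the integral equation and read it backwards. Differentiating $Y(t)=e^{Jt}Y_0+\int_0^t e^{J(t-s)}g(\cdots)\,ds$ recovers (10) together with $Y(0)=Y_0$. Differentiating $X(t)=\tfrac1\epsilon\int_{-\infty}^t e^{A_\alpha(t-s)/\epsilon}f(\cdots)\,ds$ — the improper integral converging by the estimate above, which also legitimizes differentiating under the integral sign and using $\tfrac{d}{dt}e^{A_\alpha t/\epsilon}=\tfrac1\epsilon A_\alpha e^{A_\alpha t/\epsilon}$ on the range of $A_\alpha$ — recovers (9), with $X_0:=X(0)$ the value furnished by the formula, so that $Z$ is a solution with initial datum $Z_0=(X_0,Y_0)^T$; uniqueness for that datum is Lemma 3.1 (together with closedness of $A_\alpha$ and $J$).

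The main obstacle is the step $r\to-\infty$: controlling the boundary term and the convergence of the improper integral requires simultaneously the sectorial decay of $e^{A_\alpha t/\epsilon}$, the admissible growth rate $\beta=-\gamma/\epsilon$ encoded in $C_\beta^-$, the spectral-gap inequality $\lambda_1-\gamma>K>0$, and — the genuinely non-Gaussian ingredient — the temperedness of the $\alpha$-stable-driven stationary solutions $\eta^\epsilon$ and $\xi$, i.e.\ the fact that heavy-tailed Lévy noise still yields stationary orbits of at most sub-exponential growth $\mathbb P$-almost surely. Everything else is a routine application of Duhamel's formula and differentiation.
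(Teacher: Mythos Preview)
Your proposal is correct and follows the same route as the paper: write the $Y$-equation by variation of constants from $t=0$, write the $X$-equation by variation of constants on $[r,t]$, and use the sectorial decay of $e^{A_\alpha t/\epsilon}$ together with $X\in C_\beta^{H_1,-}$ and $\lambda_1-\gamma>0$ to kill the boundary term as $r\to-\infty$. You are in fact more careful than the paper, which only writes out the necessity direction and only checks that the boundary term vanishes, without discussing convergence of the improper integral, temperedness of the L\'evy-driven stationary orbits, or the sufficiency direction; your treatment of all three is appropriate and does not change the underlying argument.
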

\begin{proof} If $(X(\cdot,\omega),Y(\cdot,\omega))^{T}$ in $C_{\beta}^{-}$, then by using constants of variation formula, random system (9)-(10) in integral form is
\begin{align}
X(t)&=e^{\frac{A_{\alpha}(t-r)}{\epsilon}}X(r)+\frac{1}{\epsilon}\int_{r}^{t}e^{A_{\alpha}(t-s)/\epsilon}f(X(s)+\sigma_{1}\eta^{\epsilon}(\theta_{s}^{1}\omega_{1}),Y(s)+\sigma_{2}\xi(\theta_{s}^{2}\omega_{2}))ds,\\
Y(t)&=e^{Jt}Y_{0}+\int_{0}^{t}e^{J(t-s)}g(X(s)+\sigma_{1}\eta^{\epsilon}(\theta_{s}^{1}\omega_{1}),Y(s)+\sigma_{2}\xi(\theta_{s}^{2}\omega_{2}))ds.
\end{align}
Since, $(X(\cdot,\omega),Y(\cdot,\omega))^{T}$ in $C_{\beta}^{-}$. So,
\begin{align*}
||e^{\frac{A_{\alpha}(t-r)}{\epsilon}}X(r)||_{C_{\beta}^{H_{1},-}}
&\leq e^{\frac{-\lambda_{1}(t-r)}{\epsilon}}||X(r)||_{C_{\beta}^{H_{1},-}}\\&=e^{\frac{-\lambda_{1}(t-r)}{\epsilon}}\mathop {\mbox{sup} }\limits_{r\in(-\infty,0]}||e^{-\beta r}X(r)||_{1}\\&=
e^{\frac{-\lambda_{1}(t-r)}{\epsilon}}||X(r)||_{1}\rightarrow0, \mbox{ as }r\rightarrow-\infty.\end{align*}
Hence, (12) leads to
\begin{align}
X(t)=\frac{1}{\epsilon}\int_{-\infty}^{t}e^{A_{\alpha}(t-s)/\epsilon}f(X(s)+\sigma_{1}\eta^{\epsilon}(\theta_{s}^{1}\omega_{1}),Y(s)+\sigma_{2}\xi(\theta_{s}^{2}\omega_{2}))ds.
\end{align}
The result follows from (13)-(14).\end{proof}
\begin{lemma}
Suppose that $Z(t,\omega,Z_{0})=(X(t,\omega,(X_{0},Y_{0})^{T} ,Y(t,\omega,(X_{0},Y_{0})^{T}))^{T}$ be the solution of
\begin{align}&\binom{X(t)}{Y(t)}=\binom{\frac{1}{\epsilon}\int_{-\infty}^{t}e^{A_{\alpha}(t-s)/\epsilon}f(X(s)+\sigma_{1}\eta^{\epsilon}(\theta_{s}^{1}\omega_{1}),Y(s)+\sigma_{2}\xi(\theta_{s}^{2}\omega_{2}))ds}{e^{Jt}Y_{0}+\int_{0}^{t}e^{J(t-s)}g(X(s)+\sigma_{1}\eta^{\epsilon}(\theta_{s}^{1}\omega_{1}),Y(s)+\sigma_{2}\xi(\theta_{s}^{2}\omega_{2}))ds},t\leq0.\end{align}
Then $Z^{\epsilon}(t,\omega,Z_{0})$ is the unique solution in $C_{\beta}^{-}$, where $Z_{0}=(X_{0},Y_{0})^{T}$ is the initial value.\end{lemma}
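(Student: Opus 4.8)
The plan is to read the right-hand side of (15) as a Lyapunov--Perron operator on the Banach space $C_\beta^-$ with $\beta=-\gamma/\epsilon$, and to obtain the solution as its unique fixed point via the contraction mapping principle. Concretely, for $Z=(X,Y)^T\in C_\beta^-$ define $\mathcal J^\epsilon Z=(\mathcal J_1^\epsilon Z,\mathcal J_2^\epsilon Z)^T$ by
\begin{align*}
\mathcal J_1^\epsilon Z(t)&=\frac1\epsilon\int_{-\infty}^t e^{A_\alpha(t-s)/\epsilon}f\bigl(X(s)+\sigma_1\eta^\epsilon(\theta_s^1\omega_1),\,Y(s)+\sigma_2\xi(\theta_s^2\omega_2)\bigr)\,ds,\\
\mathcal J_2^\epsilon Z(t)&=e^{Jt}Y_0+\int_0^t e^{J(t-s)}g\bigl(X(s)+\sigma_1\eta^\epsilon(\theta_s^1\omega_1),\,Y(s)+\sigma_2\xi(\theta_s^2\omega_2)\bigr)\,ds,\qquad t\le0.
\end{align*}
Here $Y_0\in H_2$ enters as a fixed parameter. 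By Lemma~4.1, a function $Z\in C_\beta^-$ solves (15) if and only if it is a fixed point of $\mathcal J^\epsilon$, and for such a $Z$ the value $X_0:=\mathcal J_1^\epsilon Z(0)$ is determined, so that the solution is the claimed $Z^\epsilon(\cdot,\omega,Z_0)$ with $Z_0=(X_0,Y_0)^T$. Hence it suffices to show that $\mathcal J^\epsilon$ is a well-defined contraction on $C_\beta^-$.

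First I would verify that $\mathcal J^\epsilon$ maps $C_\beta^-$ into itself. Throughout one uses the Lipschitz bounds (S2) together with $f(0,0)=g(0,0)=0$, which give $\|f(u,v)\|_1\le K(\|u\|_1+\|v\|_2)$ and $\|g(u,v)\|_2\le K(\|u\|_1+\|v\|_2)$. For the slow component one combines this with the dichotomy estimate (S1), $\|e^{Jt}y\|_2\le e^{\gamma_J t}\|y\|_2$ for $t\le0$; for the fast component one uses the decay bound of Lemma~2.1, $\|e^{A_\alpha(t-s)/\epsilon}\|_1\le e^{-\lambda_1(t-s)/\epsilon}$ for $s\le t$. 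Inserting $e^{-\beta s}e^{\beta s}$, the contribution of $X$ and $Y$ is controlled since $\lambda_1/\epsilon+\beta=(\lambda_1-\gamma)/\epsilon>0$ by (11). The delicate point --- and the step I expect to be the main obstacle --- is the contribution of the stationary noise orbits $s\mapsto\|\sigma_1\eta^\epsilon(\theta_s^1\omega_1)\|_1$ and $s\mapsto\|\sigma_2\xi(\theta_s^2\omega_2)\|_2$: one has to invoke that these grow subexponentially in $|s|$ (temperedness) for $\mathbb P$-almost every $\omega$, a consequence of the finite first moment of $L_t^{\alpha_i}$ already used in the proof of Lemma~3.2; combined with the exponential factors $e^{-\lambda_1(t-s)/\epsilon}$, resp.\ $e^{J(t-s)}$, and with the weight $e^{-\beta t}=e^{\gamma t/\epsilon}$ (which is $\le1$ for $t\le0$ and decays as $t\to-\infty$), this yields $\sup_{t\le0}\|e^{-\beta t}\,\mathcal J^\epsilon Z(t)\|_{\mathbb H}<\infty$. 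Continuity of $t\mapsto\mathcal J^\epsilon Z(t)$ is clear because, after the random transformation, (9)--(10) contain no noise term, so the integrands are locally bounded.

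Next I would estimate the Lipschitz constant of $\mathcal J^\epsilon$. Given $Z,\bar Z\in C_\beta^-$ and writing $\|X(s)-\bar X(s)\|_1\le e^{\beta s}\|Z-\bar Z\|_{C_\beta^-}$ (and similarly for the second component), the fast component gives
\begin{align*}
\|\mathcal J_1^\epsilon Z-\mathcal J_1^\epsilon\bar Z\|_{C_\beta^{H_1,-}}\le\frac K\epsilon\Bigl(\int_0^\infty e^{-(\lambda_1-\gamma)u/\epsilon}\,du\Bigr)\|Z-\bar Z\|_{C_\beta^-}=\frac{K}{\lambda_1-\gamma}\,\|Z-\bar Z\|_{C_\beta^-},
\end{align*}
while the slow component, using (S1) and $\gamma_J-\beta=\gamma_J+\gamma/\epsilon>0$, gives $\|\mathcal J_2^\epsilon Z-\mathcal J_2^\epsilon\bar Z\|_{C_\beta^{H_2,-}}\le\frac{K}{\gamma_J}\|Z-\bar Z\|_{C_\beta^-}$. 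Thus $\mathcal J^\epsilon$ has Lipschitz constant at most $K\bigl(\tfrac1{\lambda_1-\gamma}+\tfrac1{\gamma_J}\bigr)$; by (11) the first term is already strictly less than $1$, and the smallness assumption (S3) together with the choice $\gamma=\gamma_J/(2\lambda_1+\gamma_J)$ is precisely what makes the whole sum strictly less than $1$. The Banach fixed-point theorem then provides a unique $Z^\epsilon(\cdot,\omega,Z_0)\in C_\beta^-$ with $\mathcal J^\epsilon Z^\epsilon=Z^\epsilon$, which by Lemma~4.1 is the unique solution of (15) in $C_\beta^-$; measurability of $\omega\mapsto Z^\epsilon(\cdot,\omega,Z_0)$, needed in the sequel, follows from the uniform-contraction form of that theorem.
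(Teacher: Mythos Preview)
Your approach is the same Lyapunov--Perron/contraction-mapping argument as the paper's, and most of it is fine; your remarks on temperedness of the stationary orbits are in fact more careful than the paper, which simply absorbs those contributions into unspecified constants $\mathcal{C}_i$.

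There is, however, one concrete slip in your contraction estimate. For the slow component you correctly compute $\gamma_J-\beta=\gamma_J+\gamma/\epsilon$, but then you record the bound as $K/\gamma_J$ instead of the sharp $K/(\gamma_J-\beta)=K\epsilon/(\gamma+\epsilon\gamma_J)$. With that crude bound your total Lipschitz constant becomes $K\bigl(\tfrac{1}{\lambda_1-\gamma}+\tfrac{1}{\gamma_J}\bigr)$, and the claim that (S3) together with $\gamma=\gamma_J/(2\lambda_1+\gamma_J)$ forces this below $1$ is false: take $\lambda_1=1$ and $\gamma_J=10$, so $\gamma=5/6$ and $\lambda_1-\gamma=1/6$; then (11) allows $K$ just below $1/6$, giving $K\bigl(6+\tfrac{1}{10}\bigr)$ just below $61/60>1$. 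The paper avoids this by keeping the $\epsilon$-dependent constant
\[
\varrho(\epsilon)=\frac{K}{\lambda_1-\gamma}+\frac{K\epsilon}{\gamma+\epsilon\gamma_J},
\]
observing that $\varrho(\epsilon)\to K/(\lambda_1-\gamma)<1$ as $\epsilon\to0$ by (11), and concluding that the map is a contraction for all $\epsilon\in(0,\epsilon_0)$. That is the fix you need: do not discard the $\gamma/\epsilon$ from the slow-component denominator, and state the conclusion for sufficiently small $\epsilon$.
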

\begin{proof} With the help of Banach fixed point theorem, we prove that $Z(t,\omega,Z_{0})=(X(t,\omega,(X_{0},Y_{0})^{T} ,Y(t,\omega,(X_{0},Y_{0})^{T}))^{T}$ is the unique solution of (15). In order to prove it, let's introduce two operators for $t\leq0$:
$$\mathfrak{K}_{i}(Z)[t]=\frac{1}{\epsilon}\int_{-\infty}^{t}e^{A_{\alpha}(t-s)/\epsilon}f(X(s)+\sigma_{1}\eta^{\epsilon}(\theta_{s}^{1}\omega_{1}),Y(s)+\sigma_{2}\xi(\theta_{s}^{2}\omega_{2}))ds,$$
$$\mathfrak{K}_{j}(Z)[t]=e^{Jt}Y_{0}+\int_{0}^{t}e^{J(t-s)}g(X(s)+\sigma_{1}\eta^{\epsilon}(\theta_{s}^{1}\omega_{1}),Y(s)+\sigma_{2}\xi(\theta_{s}^{2}\omega_{2}))ds.$$
Then Lyapunov-Perron transform is defined to be
$$\mathfrak{K}(Z)=\binom{\mathfrak{K}_{i}(Z)}{\mathfrak{K}_{j}(Z)}=(\mathfrak{K}_{i}(Z),\mathfrak{K}_{j}(Z))^{T}.$$
First we need to prove that the transform $\mathfrak{K}$ maps $C_{\beta}^{-}$ into itself.
For this consider $Z=(X,Y)^{T}$ in $C_{\beta}^{-}$ satisfying:
\begin{align*}||\mathfrak{K}_{i}(Z)[t]||_{C_{\beta}^{H_{1},-}}&=||\frac{1}{\epsilon}\int_{-\infty}^{t}e^{A_{\alpha}(t-s)/\epsilon}f(X(s)+\sigma_{1}\eta^{\epsilon}(\theta_{s}^{1}\omega_{1}),Y(s)+\sigma_{2}\xi(\theta_{s}^{2}\omega_{2}))ds||_{1}\\
&\leq\frac{1}{\epsilon}\mathop { \mbox{sup} }\limits_{t\in (-\infty,0]}\{e^{-\beta (t-s)}\int_{-\infty}^{t}e^{-\lambda_{1}(t-s)/\epsilon}||f(X(s)+\sigma_{1}\eta^{\epsilon}(\theta_{s}^{1}\omega_{1}),Y(s)\\&\indent+\sigma_{2}\xi(\theta_{s}^{2}\omega_{2}))||_{1}ds\\
&\leq\frac{K}{\epsilon}\mathop {\mbox{sup} }\limits_{t\in (-\infty,0]}\{e^{-\beta (t-s)}\int_{-\infty}^{t}e^{-\lambda_{1}(t-s)/\epsilon}(||X(s)||_{1}+||Y(s)||_{2})ds\}+ \mathcal{C}_{i}\\ &\leq\frac{K}{\epsilon}\mathop {\mbox{sup} }\limits_{t\in(-\infty,0]}\{\int_{-\infty}^{t}e^{(-\beta-\lambda_{1}/\epsilon)(t-s)}ds\}||Z||_{C_{\beta}^{-}}+ \mathcal{C}_{i}\\&=\frac{K}{\lambda_{1}+\epsilon\beta}||Z||_{C_{\beta}^{-}}+ \mathcal{C}_{i}.\end{align*}
Similarly, we have
\begin{align*}||\mathfrak{K}_{j}(Z)[t]||_{C_{\beta}^{H_{2},-}}&=||e^{Jt}Y_{0}+\int_{0}^{t}e^{J(t-s)}g(X(s)+\sigma_{1}\eta^{\epsilon}(\theta_{s}^{1}\omega_{1}),Y(s)+\sigma_{2}\xi(\theta_{s}^{2}\omega_{2}))ds||_{2}\\
&\leq \mathop {\mbox{sup} }\limits_{t\in(-\infty,0]}\{e^{-\beta (t-s)}\int_{t}^{0}e^{\gamma_{J}(t-s)}||g(X(s)+\sigma_{1}\eta^{\epsilon}(\theta_{s}^{1}\omega_{1}),Y(s)\\&\indent+\sigma_{2}\xi(\theta_{s}^{2}\omega_{2}))||_{2}ds\}+\mathop {\mbox{sup} }\limits_{t\in (-\infty,0]}\{e^{-\beta t}e^{\gamma_{J} t}||Y_{0}||_{2}\}\\
&\leq K \mathop {\mbox{sup} }\limits_{t\in (-\infty,0]}\{\int_{t}^{0}e^{(\gamma_{J}-\beta)(t-s)}(||X(s)||_{1}+||Y(s)||_{2})ds\}+\mathcal{C}_{j}+||Y_{0}||_{2}\\&=\frac{ K}{-\beta+\gamma_{J}}||Z||_{C_{\beta}^{-}}+\mathcal{C}_{j}+||Y_{0}||_{2}\\&=\frac{K}{ -\beta+\gamma_{J}}||Z||_{C_{\beta}^{-}}+\mathcal{C}_{k}.\end{align*}
By Lyapunov-Perron transform definition $\mathfrak{K}$ in combine form is\begin{align*}||\mathfrak{K}(Z)||_{C_{\beta}^{-}}\leq\varrho(\lambda_{1},\gamma_{J},K,\beta,\epsilon)||Z||_{C_{\beta}^{-}}+\mathcal{C}.\end{align*} Where $\mathcal{C}, \mathcal{C}_{i}, \mathcal{C}_{j}$ and $\mathcal{C}_{k}$ are constants, while \begin{align*}\varrho(\lambda_{1},\gamma_{J},K,\beta,\epsilon)= \frac{K}{\lambda_{1}+\epsilon\beta}+\frac{K}{-\beta+\gamma_{J}}.\end{align*}
Hence $\mathfrak{K}$ maps $C_{\beta}^{-}$ into itself, which means $\mathfrak{K}(Z)$ is in $C_{\beta}^{-}$ for every $Z$ in $C_{\beta}^{-}$.\\
\indent Next, we need to prove that the map $\mathfrak{K}$ is contractive. For this, let's consider $Z=(X,Y)^{T},\bar{Z}=(\tilde{X},\tilde{Y})^{T}\in C_{\beta}^{-}$,
\begin{align*}||\mathfrak{K}_{i}(Z)-\mathfrak{K}_{i}(\tilde{Z})||_{C_{\beta}^{H_{1},-}}&\leq\frac{1}{\epsilon}\mathop {\mbox{sup} }\limits_{t\in (-\infty,0]}\{e^{-\beta (t-s)}\int_{-\infty}^{t}e^{-\lambda_{1}(t-s)/\epsilon}||f(X(s)+\sigma_{1}\eta^{\epsilon}(\theta_{s}^{1}\omega_{1}),Y(s)\\& \indent +\sigma_{2}\xi(\theta_{s}^{2}\omega_{2}))-f(\tilde{X}(s)+\sigma_{1}\eta^{\epsilon}(\theta_{s}^{1}\omega_{1}),\tilde{Y}(s)+\sigma_{2}\xi(\theta_{s}^{2}\omega_{2}))||_{1}ds\}\\
&\leq\frac{K}{\epsilon}\mathop {\mbox{sup} }\limits_{t\in(-\infty,0]}\{e^{-\beta (t-s) }\int_{-\infty}^{t}e^{-\lambda_{1}(t-s)/\epsilon}(||X(s)-\tilde{X}(s)||_{1}\\&\indent+||Y(s)-\tilde{Y}(s)||_{2})ds\\
&\leq\frac{K}{\epsilon}\mathop {\mbox{sup} }\limits_{t\in(-\infty, 0]}\int_{-\infty}^{t}e^{(\frac{-\lambda_{1}}{\epsilon}-\beta)(t-s)}ds\}||Z-\tilde{Z}||_{C_{\beta}^{-}}\\
&=\frac{K}{\lambda_{1}+\epsilon\beta}||Z-\tilde{Z}||_{C_{\beta}^{-}}.\end{align*}
Using the same way
\begin{align*}||\mathfrak{K}_{j}(Z)-\mathfrak{K}_{j}(\tilde{Z})||_{C_{\beta}^{H_{2},-}}&\leq K \mathop {\mbox{sup} }\limits_{t\in(-\infty, 0]}\{\int_{t}^{0}e^{\gamma_{J}(t-s)}e^{-\beta(t-s)}ds\}||Z-\tilde{ Z}||_{C_{\beta}^{-}}\\
&\leq K \mathop {\mbox{sup} }\limits_{t\in (-\infty, 0]}\int_{t}^{0}e^{(\gamma_{J}-\beta)(t-s)}ds\}||Z-\tilde{Z}||_{C_{\beta}^{-}}\\&=\frac{ K}{-\beta+\gamma_{J}}||Z-\tilde{Z}||_{C_{\beta}^{-}}.\end{align*}
In combine form\begin{align*} &||\mathfrak{K}(Z)-\mathfrak{K}(\tilde{Z})||_{C_{\beta}^{-}}\leq\varrho(\lambda_{1},\gamma_{J},K,\beta,\epsilon)||Z-\tilde{Z}||_{C_{\beta}^{-}},\end{align*}
where\begin{align*}&\varrho(\lambda_{1},\gamma_{J},K,\beta,\epsilon)=\frac{K}{\lambda_{1}+\epsilon\beta}+\frac{K}{-\beta+\gamma_{J}}\rightarrow\frac{K}{\lambda_{1}}+\frac{K}{-\beta+\gamma_{J}} \mbox{ for } \epsilon\rightarrow0.\end{align*}
By the supposition (S3), and $\beta=-\frac{\gamma}{\epsilon}$,
\begin{align*}\varrho(\lambda_{1},\gamma_{J},K,\beta,\epsilon)\rightarrow\frac{K}{\lambda_{1}} \mbox{ for } \epsilon\rightarrow0.\end{align*}
So, there is a very small parameter $\epsilon_{0}\rightarrow 0$ such that
\begin{align*}0<\varrho(\lambda_{1},\gamma_{J},K,\beta,\epsilon)< 1,\mbox{ for }\epsilon \mbox{ in } (0,\epsilon_{0}).\end{align*} Hence, by definition of contractive mapping, the map $\mathfrak{K}$ is contractive in $C_{-\frac{\gamma}{\epsilon}}^{-}$. By Banach fixed point theorem, every contractive mapping in non-empty Banach space has a unique fixed point, which is a unique solution. Hence (15) has the unique solution \begin{align*}Z(t,\omega,Z_{0})=(X(t,\omega,(X_{0},Y_{0})^{T}),Y(t,\omega,(X_{0},Y_{0})^{T}))^{T} \mbox{ in } C_{-\frac{\gamma}{\epsilon}}^{-}.\end{align*}\end{proof}
From Lemma 4.2 we get the following remark.\\
\begin{remark} For any $(X_{0},Y_{0})^{T}$, $(X'_{0},Y'_{0})^{T}$ in $\mathbb{H}$, and for all $\omega \in \Omega, Y_{0}, Y'_{0} \in H_{2},$ there is an $\epsilon_{0}>0$ such that
\begin{align}||Z(t,\omega,(X_{0},Y_{0})^{T})-Z(t,\omega,(X'_{0},Y'_{0})^{T})||_{C_{-\frac{\gamma}{\epsilon}}^{-}}\leq\frac{1}{1-\varrho(\lambda_{1},\gamma_{J},K,\beta,\epsilon)}||Y_{0}-Y_{0}'||_{2}.\end{align}\end{remark}
\begin{proof} For the sake of simplicity, instead of writing $Z(t,\omega,(X_{0},Y_{0})^{T})$ and $Z(t,\omega,(X'_{0},Y'_{0})^{T})$, let's write $Z(t,\omega,Y_{0})$ and $Z(t,\omega,Y'_{0})$. For all $\omega \in \Omega$ and $Y_{0}, Y'_{0}$ in $H_{2}$, we have the upper-bound
 \begin{align*}||Z(t,\omega,Y_{0})-Z(t,\omega,Y'_{0})||_{C_{-\frac{\gamma}{\epsilon}}^{-}} &=||X(t,\omega,Y_{0})-X(t,\omega,Y'_{0})||_{C_{-\frac{\gamma}{\epsilon}}^{H_{1},-}}+||Y(t,\omega,Y_{0})\\&\indent-Y(t,\omega,Y'_{0})||_{C_{-\frac{\gamma}{\epsilon}}^{H_{2},-}}\\
&\leq\frac{K}{\lambda_{1}+\epsilon\beta}||Z(t,\omega,Y_{0})-Z(t,\omega,Y'_{0})||_{C_{-\frac{\gamma}{\epsilon}}^{-}}+\frac{ K}{-\beta+\gamma_{J}}\\&\indent\times||Z(t,\omega,Y_{0})-Z(t,\omega,Y'_{0})||_{C_{-\frac{\gamma}{\epsilon}}^{-}} +||Y_{0}-Y'_{0}||_{2}\\
&=\varrho(\lambda_{1},\gamma_{J},K,\beta,\epsilon)||Z(t,\omega,Y_{0})-Z(t,\omega,Y'_{0})||_{C_{-\frac{\gamma}{\epsilon}}^{-}}\\&\indent+||Y_{0}-Y'_{0}||_{2}.\end{align*}
Thus, \small{\begin{align}||Z(t,\omega,(X_{0},Y_{0})^{T})-Z(t,\omega,(X'_{0},Y'_{0})^{T})||_{C_{-\frac{\gamma}{\epsilon}}^{-}}\leq\frac{1}{1-\varrho(\lambda_{1},\gamma_{J},K,\beta,\epsilon)}||Y_{0}-Y_{0}'||_{2}.\end{align}}
\end{proof}
\begin{theorem} Let suppositions (S1-S3) satisfied. Then for sufficiently small $\epsilon>0$, random system of equations(9)-(10) posses a Lipschitz random slow manifold:$$\mathcal{M}^{\epsilon}(\omega)=\{(\mathcal{H}^{\epsilon}(\omega,Y_{0}),Y_{0})^{T}:Y_{0}\in H_{2}\},$$where $$\mathcal{H}^{\epsilon}(\cdot,\cdot):\Omega\times H_{2}\rightarrow L^{2}(-1,1),$$ is a Lipschitz continuous graph map having Lipschitz constant$$Lip \mathcal{H}^{\epsilon}(\omega,\cdot)\leq\frac{K}{(\lambda_{1}-\gamma)[1-K(\frac{1}{\lambda_{1}-\gamma}+\frac{\epsilon}{\gamma+\epsilon\gamma_{J}})]}.$$\end{theorem}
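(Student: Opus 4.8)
The plan is to construct $\mathcal{H}^{\epsilon}$ from the unique backward orbit furnished by Lemma 4.2 and then to verify, in order, that $\mathcal{M}^{\epsilon}(\omega)$ is exactly the graph of this map, that the map is Lipschitz with the stated constant, that $\mathcal{M}^{\epsilon}$ is a random set, and that it is positively invariant. First I would fix $\omega\in\Omega$ and $Y_{0}\in H_{2}$. By Lemma 4.2 the Lyapunov--Perron system (15) has a unique solution $Z(\cdot,\omega,Y_{0})=(X(\cdot,\omega,Y_{0}),Y(\cdot,\omega,Y_{0}))^{T}$ in $C_{\beta}^{-}$ with $\beta=-\frac{\gamma}{\epsilon}$; since the first component of (15) involves no $X_{0}$, this fixed point is parametrized by $Y_{0}$ alone, $Y(0,\omega,Y_{0})=Y_{0}$, and by Lemma 4.1 it is also a $C_{\beta}^{-}$ solution of (9)-(10) issued from its own value at $t=0$. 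I would then \emph{define}
$$\mathcal{H}^{\epsilon}(\omega,Y_{0}):=X(0,\omega,Y_{0})=\frac{1}{\epsilon}\int_{-\infty}^{0}e^{-A_{\alpha}s/\epsilon}f\big(X(s,\omega,Y_{0})+\sigma_{1}\eta^{\epsilon}(\theta_{s}^{1}\omega_{1}),\,Y(s,\omega,Y_{0})+\sigma_{2}\xi(\theta_{s}^{2}\omega_{2})\big)\,ds\in L^{2}(-1,1),$$
the integral being finite by the estimates already obtained in the proof of Lemma 4.2.

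Next I would prove $\mathcal{M}^{\epsilon}(\omega)=\{(\mathcal{H}^{\epsilon}(\omega,Y_{0}),Y_{0})^{T}:Y_{0}\in H_{2}\}$ by showing both inclusions. If $Z_{0}=(\mathcal{H}^{\epsilon}(\omega,Y_{0}),Y_{0})^{T}$, then $Z(\cdot,\omega,Y_{0})\in C_{\beta}^{-}$ is a solution of (9)-(10) with this initial value, so $Z_{0}\in\mathcal{M}^{\epsilon}(\omega)$. Conversely, if $Z_{0}=(X_{0},Y_{0})^{T}\in\mathcal{M}^{\epsilon}(\omega)$, the solution $Z(\cdot,\omega,Z_{0})$ lies in $C_{\beta}^{-}$, hence by Lemma 4.1 it satisfies (15), and the uniqueness in Lemma 4.2 forces $Z(\cdot,\omega,Z_{0})=Z(\cdot,\omega,Y_{0})$; evaluation at $t=0$ gives $X_{0}=\mathcal{H}^{\epsilon}(\omega,Y_{0})$. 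Measurability of $\omega\mapsto\mathcal{H}^{\epsilon}(\omega,Y_{0})$ — which makes $\mathcal{M}^{\epsilon}$ a random set in the sense of Section 2 — follows because the Banach fixed point is the $C_{\beta}^{-}$-limit of the Lyapunov--Perron iterates $\mathfrak{K}^{n}(0)$, each of which is measurable in $\omega$.

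For the Lipschitz estimate I would subtract the integral formulas for $\mathcal{H}^{\epsilon}(\omega,Y_{0})$ and $\mathcal{H}^{\epsilon}(\omega,Y_{0}')$, use the semigroup bound of Lemma 2.1 together with hypothesis (S2) on $f$, and factor the weight $e^{-\beta s}=e^{(\gamma/\epsilon)s}$ through the integral. Because $\lambda_{1}/\epsilon+\beta=(\lambda_{1}-\gamma)/\epsilon>0$ by (13), the time integral evaluates to $\frac{\epsilon}{\lambda_{1}-\gamma}$, cancelling the prefactor $1/\epsilon$, so that
$$\|\mathcal{H}^{\epsilon}(\omega,Y_{0})-\mathcal{H}^{\epsilon}(\omega,Y_{0}')\|_{1}\leq\frac{K}{\lambda_{1}-\gamma}\,\|Z(\cdot,\omega,Y_{0})-Z(\cdot,\omega,Y_{0}')\|_{C_{\beta}^{-}}.$$
Bounding the right-hand side by Remark 4.3 and inserting $\varrho(\lambda_{1},\gamma_{J},K,\beta,\epsilon)=\frac{K}{\lambda_{1}-\gamma}+\frac{K\epsilon}{\gamma+\epsilon\gamma_{J}}=K\big(\frac{1}{\lambda_{1}-\gamma}+\frac{\epsilon}{\gamma+\epsilon\gamma_{J}}\big)$ yields exactly
$$Lip\,\mathcal{H}^{\epsilon}(\omega,\cdot)\leq\frac{K}{(\lambda_{1}-\gamma)[1-\varrho(\lambda_{1},\gamma_{J},K,\beta,\epsilon)]}=\frac{K}{(\lambda_{1}-\gamma)[1-K(\frac{1}{\lambda_{1}-\gamma}+\frac{\epsilon}{\gamma+\epsilon\gamma_{J}})]},$$
which is finite and positive once $\epsilon\in(0,\epsilon_{0})$, since $1-\varrho>0$ there as recorded in the proof of Lemma 4.2.

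Finally, for positive invariance I would fix $Z_{0}\in\mathcal{M}^{\epsilon}(\omega)$ with $C_{\beta}^{-}$ backward orbit $Z(\cdot,\omega,Z_{0})$, glue it to the forward cocycle on $[0,l]$ to obtain a solution of (9)-(10) on $(-\infty,l]$, and check that $s\mapsto Z(s+l,\omega,Z_{0})$ ($s\leq0$) is, after the shift $\omega\mapsto\theta_{l}\omega$ (which replaces $\eta^{\epsilon}(\theta_{s}^{1}\omega_{1})$, $\xi(\theta_{s}^{2}\omega_{2})$ by $\eta^{\epsilon}(\theta_{s+l}^{1}\omega_{1})$, $\xi(\theta_{s+l}^{2}\omega_{2})$), again a $C_{\beta}^{-}$ solution of the Lyapunov--Perron system (15); the graph characterization then gives $\phi(l,\omega,Z_{0})\in\mathcal{M}^{\epsilon}(\theta_{l}\omega)$. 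The $C_{\beta}^{-}$ membership here is immediate, for $\beta<0$ and $l\geq0$: when $s+l\leq0$ one has $e^{-\beta s}\|Z(s+l,\omega,Z_{0})\|_{\mathbb{H}}=e^{\beta l}e^{-\beta(s+l)}\|Z(s+l,\omega,Z_{0})\|_{\mathbb{H}}\leq\|Z(\cdot,\omega,Z_{0})\|_{C_{\beta}^{-}}$, and on the compact interval $-l\leq s\leq0$ continuity gives a finite bound. I expect this gluing/invariance step — making the concatenation rigorous and confirming it solves the mild form of (15) for $\theta_{l}\omega$ — together with the (routine) measurable-selection argument behind the random-set property, to be the only genuinely delicate points; the graph identification and the Lipschitz bound fall out directly from Lemmas 4.1--4.2 and Remark 4.3.
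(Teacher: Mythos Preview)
Your proposal is correct and follows essentially the same Lyapunov--Perron strategy as the paper: define $\mathcal{H}^{\epsilon}$ by (18), obtain the Lipschitz bound from the $\frac{K}{\lambda_{1}+\epsilon\beta}$ estimate combined with Remark 4.3, identify $\mathcal{M}^{\epsilon}(\omega)$ as the graph via Lemma 4.1, and prove positive invariance through the cocycle shift. The only minor deviation is your measurability argument via limits of the contraction iterates $\mathfrak{K}^{n}(0)$, whereas the paper instead invokes Theorem III.9 of Castaing--Valadier directly on the distance function $\omega\mapsto\inf_{Z'}\|(X,Y)^{T}-(\mathcal{H}^{\epsilon}(\omega,\mathfrak{K}Z'),\mathfrak{K}Z')^{T}\|$; both routes are standard and yield the random-set property.
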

\begin{proof} For any $Y_{0}\in H_{2}$, introduce the Lyapunov-Perron map  $\mathcal{H}^{\epsilon}:$\begin{equation}\label{sde}\mathcal{H}^{\epsilon}(\omega, Y_{0})=\frac{1}{\epsilon}\int_{-\infty}^{0}e^{-A_{\alpha}s/\epsilon}f(X(s,\omega,Y_{0})+\sigma_{1}\eta^{\epsilon}(\theta_{s}^{1}\omega_{1}),Y(s,\omega,Y_{0})+\sigma_{2}\xi(\theta_{s}^{2}\omega_{2}))ds,\end{equation}
then by (17), the following upper-bound is obtained $$||\mathcal{H}^{\epsilon}(\omega,Y_{0})-H^{\epsilon}(\omega,Y'_{0})||_{1}\leq\frac{K}{\lambda_{1}+\epsilon\beta}\frac{1}{[1-\varrho(\lambda_{1},\gamma_{J},K,\beta,\epsilon)]}||Y_{0}-Y'_{0}||_{2},$$
for all $Y_{0},Y'_{0} \in H_{2}$ and $\omega \in \Omega$. So$$||\mathcal{H}^{\epsilon}(\omega,Y_{0})-\mathcal{H}^{\epsilon}(\omega,Y'_{0})||_{1}\leq\frac{K}{-\gamma+\lambda_{1}}\frac{1}{[1-\varrho(\lambda_{1},\gamma_{J},K,\beta,\epsilon)]}||Y_{0}-Y'_{0}||_{2},$$
for every $Y_{0},Y'_{0} \in H_{2}$ and $\omega \in \Omega$. Then by Lemma 4.1, \label{sde} $$\mathcal{M}^{\epsilon}(\omega)=\{(\mathcal{H}^{\epsilon}(\omega,Y_{0}),Y_{0})^{T}:Y_{0}\in H_{2}\}.$$
Next by using of Theorem III.9 in Casting and Valadier (\cite{castaing1977convex}, p.67), $\mathcal{M}^{\epsilon}(\omega)$ is a random set, i.e., for any $Z=(X,Y)^{T}$ in $\mathbb{H}=H_{1}\times H_{2}$,\begin{equation}\label{sde}\omega\mapsto \mathop {\mbox{inf} }\limits_{Z'\in \mathbb{H}}||(X,Y)^{T}-(\mathcal{H}^{\epsilon}(\omega,\mathfrak{K}Z'),\mathfrak{K}Z')^{T}||,\end{equation}is measurable. Let there is a countable dense set, say, $\mathbb{H}_{c}$ of separable space $\mathbb{H}$. Then right side of (19) is \begin{equation}\label{sde}\mathop {\mbox{inf} }\limits_{Z'\in \mathbb{H}_{c}}||(X,Y)^{T}-(\mathcal{H}^{\epsilon}(\omega,\mathfrak{K}Z'),\mathfrak{K}Z')^{T}||.\end{equation} Under infimum of (19) the measurability of any expression can be obtained, since $\omega\mapsto \mathcal{H}^{\epsilon}(\omega,\mathfrak{K}Z')$ is measurable for all $Z'$ in $\mathbb{H}.$\\
\indent Now it remains to prove that $\mathcal{M}^{\epsilon}(\omega)$ is positively invariant in the sense: for all $Z_{0} =(X_{0},Y_{0})^{T}$ in $\mathcal{M}^{\epsilon}(\omega),$ $ Z(s,\omega,Z_{0})$ is in $\mathcal{M}^{\epsilon}(\theta_{s}\omega)$ for each $s\geq 0.$ Observe that $Z(t+s,\omega,Z_{0})$ is a solution of \begin{align*}
&dX=\frac{1}{\epsilon}A_{\alpha}Xdt+\frac{1}{\epsilon}f(X+\sigma_{1}\eta^{\epsilon}(\theta_{t}^{1}\omega_{1}), Y+\sigma_{2}\xi(\theta_{t}^{2}\omega_{2}))dt,\\&dY=JYdt+g(X+\sigma_{1}\eta^{\epsilon}(\theta_{t}^{1}\omega_{1}), Y+\sigma_{2}\xi(\theta_{t}^{2}\omega_{2}))dt,\end{align*} with initial value $Z(0)=(X(0),Y(0))^{T}=Z(s,\omega,Z_{0})$. So, $Z(t+s,\omega,Z_{0})=Z(t,\theta_{s}\omega,Z(s,\omega,Z_{0}))$. Since $Z(t,\omega,Z_{0})$ in $C_{-\frac{\gamma}{\epsilon}}^{-}$, then
$Z(t,\theta_{s}\omega,Z(s,\omega,Z_{0}))$ in $C_{-\frac{\gamma}{\epsilon}}^{-}$. Hence, $Z(s,\omega,Z_{0}) \in \mathcal{M}^{\epsilon}(\theta_{s}\omega)$. It completes the proof.\end{proof}
\begin{theorem}
Let suppositions (S1-S3) satisfied. Then for sufficiently small $\epsilon>0$, random invariant manifold of random system (9)-(10) posses the exponential tracking property: there exist $\check{Z}_{0}=(\check{X}_{0},\check{Y}_{0})^{T}\in \mathcal{M}^{\epsilon}(\omega)$, for all $Z_{0}=(X_{0},Y_{0})^{T} \in \mathbb{H}$, such that
$$||\Phi(t,\omega,Z_{0})-\check{\Phi}(t,\omega,\check{Z}_{0})||\leq\mathcal{C}_{i}e^{-\mathcal{C}_{j}t}||Z_{0}-\check{Z}_{0}||,\indent t\geq0.$$
Where $\mathcal{C}_{i}$ and $\mathcal{C}_{j}$ are positive constants.\end{theorem}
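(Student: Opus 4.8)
Fix $\omega\in\Omega$ and $Z_0=(X_0,Y_0)^T\in\mathbb H$, and let $\zeta(\cdot)=(\zeta_1(\cdot),\zeta_2(\cdot))^T:=\Phi(\cdot,\omega,Z_0)$ be the orbit of (9)--(10) through $Z_0$, which exists on all of $[0,\infty)$ because the nonlinearities are globally Lipschitz (cf.\ the construction of $\Phi$ in Section 3). The plan is to run a forward-time Lyapunov--Perron argument, parallel to Lemma 4.2 and Theorem 4.4, to single out a point $\check Z_0\in\mathcal M^\epsilon(\omega)$ whose orbit shadows $\zeta$. I look for $\check Z_0$ of the graph form $\check Z_0=(\mathcal H^\epsilon(\omega,\check Y_0),\check Y_0)^T$ and put $W(\cdot)=(U,V)^T:=\Phi(\cdot,\omega,\check Z_0)-\zeta(\cdot)$; the aim is $W\in C_\beta^{+}$ with $\beta=-\gamma/\epsilon<0$, which is precisely the exponential tracking bound. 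Writing $\Delta f(s)$ and $\Delta g(s)$ for the differences of $f$, resp.\ $g$, evaluated at $\zeta(s)+W(s)$ and at $\zeta(s)$, each argument shifted by the stationary terms $\sigma_1\eta^\epsilon(\theta_s^1\omega_1)$, $\sigma_2\xi(\theta_s^2\omega_2)$, I subtract the variation-of-constants representations of (9)--(10) for $\Phi(\cdot,\omega,\check Z_0)$ and for $\zeta$, use the graph identity $\check X(0)=\mathcal H^\epsilon(\omega,\check Y_0)$ with $\check Y_0=Y_0+V(0)$, and impose $V(t)\to0$; this makes $W$ a fixed point of the operator $\mathfrak T=(\mathfrak T_1,\mathfrak T_2)$ on $C_\beta^{+}$ given by
\begin{align*}
\mathfrak T_1(W)[t]&=e^{A_\alpha t/\epsilon}\bigl(\mathcal H^\epsilon(\omega,Y_0+V(0))-X_0\bigr)+\frac1\epsilon\int_0^t e^{A_\alpha(t-s)/\epsilon}\,\Delta f(s)\,ds,\\
\mathfrak T_2(W)[t]&=-\int_t^{\infty}e^{J(t-s)}\,\Delta g(s)\,ds,\qquad t\ge0.
\end{align*}
The backward integral in $\mathfrak T_2$ is exactly the choice of $\check Y_0$ that annihilates the expanding $e^{Jt}$-component of the slow equation, and it converges because $\|e^{J(t-s)}\|_2\le e^{\gamma_J(t-s)}$ for $t\le s$ by (S1).

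Next I would verify that $\mathfrak T$ maps $C_\beta^{+}$ into itself and is a contraction for all sufficiently small $\epsilon$. Using $\|e^{A_\alpha t}\|_1\le Ce^{-\lambda_1 t}$ (Lemma 2.1; normalise $C=1$), the bound from (S1), the Lipschitz estimates (S2) on $f,g$ — which the stationary shifts leave unchanged — and the Lipschitz bound on $\mathcal H^\epsilon(\omega,\cdot)$ of Theorem 4.4, the standard weighted convolution estimates give, for $W,\widetilde W\in C_\beta^{+}$,
\begin{align*}
\|\mathfrak T(W)\|_{C_\beta^{+}}&\le\|\mathcal H^\epsilon(\omega,Y_0+V(0))-X_0\|_1+q_\epsilon\,\|W\|_{C_\beta^{+}},\\
\|\mathfrak T(W)-\mathfrak T(\widetilde W)\|_{C_\beta^{+}}&\le\bigl(\mathrm{Lip}\,\mathcal H^\epsilon(\omega,\cdot)+q_\epsilon\bigr)\,\|W-\widetilde W\|_{C_\beta^{+}},\qquad q_\epsilon:=\frac{K}{\lambda_1-\gamma}+\frac{K\epsilon}{\gamma+\epsilon\gamma_J}.
\end{align*}
As $\epsilon\downarrow0$ one has $q_\epsilon\to K/(\lambda_1-\gamma)<1$ (recall $\lambda_1-\gamma>K$), while the Theorem 4.4 bound for $\mathrm{Lip}\,\mathcal H^\epsilon(\omega,\cdot)$ converges to its value at $\epsilon=0$; using the smallness condition (S3) and the calibration $\gamma=\gamma_J/(2\lambda_1+\gamma_J)$ one checks that $\mathrm{Lip}\,\mathcal H^\epsilon(\omega,\cdot)+q_\epsilon<1$ for all $\epsilon\in(0,\epsilon_0)$ with some $\epsilon_0>0$. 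Banach's fixed point theorem then produces a unique $W^{*}=(U^{*},V^{*})^T\in C_\beta^{+}$. Setting $\check Z_0:=(\mathcal H^\epsilon(\omega,Y_0+V^{*}(0)),\,Y_0+V^{*}(0))^T$, this point lies in $\mathcal M^\epsilon(\omega)$ by construction, and $\zeta+W^{*}$ solves the integral form of (9)--(10) with initial datum $\check Z_0$, hence $\zeta+W^{*}=\Phi(\cdot,\omega,\check Z_0)$ by uniqueness of solutions.

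Finally, to read off the constants I would take norms in $W^{*}=\mathfrak T(W^{*})$: with $\check X_0:=\mathcal H^\epsilon(\omega,Y_0+V^{*}(0))$ the first estimate becomes $\|W^{*}\|_{C_\beta^{+}}\le\|\check X_0-X_0\|_1+q_\epsilon\|W^{*}\|_{C_\beta^{+}}$, so, using $q_\epsilon<1$ and $\|\check X_0-X_0\|_1\le\|\check Z_0-Z_0\|_{\mathbb H}$,
\begin{align*}
\|W^{*}\|_{C_\beta^{+}}\le\frac{1}{1-q_\epsilon}\,\|\check Z_0-Z_0\|_{\mathbb H}.
\end{align*}
Since $\|\Phi(t,\omega,Z_0)-\check\Phi(t,\omega,\check Z_0)\|_{\mathbb H}=\|W^{*}(t)\|_{\mathbb H}\le e^{\beta t}\|W^{*}\|_{C_\beta^{+}}$ and $\beta=-\gamma/\epsilon$, this yields
\begin{align*}
\|\Phi(t,\omega,Z_0)-\check\Phi(t,\omega,\check Z_0)\|_{\mathbb H}\le\frac{1}{1-q_\epsilon}\,e^{-\gamma t/\epsilon}\,\|Z_0-\check Z_0\|_{\mathbb H},\qquad t\ge0,
\end{align*}
so the assertion holds with the positive constants $\mathcal C_i=1/(1-q_\epsilon)$ and $\mathcal C_j=\gamma/\epsilon$.

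The step I expect to be the real obstacle is establishing the contraction, i.e.\ that $\mathrm{Lip}\,\mathcal H^\epsilon(\omega,\cdot)+q_\epsilon<1$: the contraction factor couples the graph-map Lipschitz constant — itself the output of the fixed-point construction in Theorem 4.4 — with the two convolution factors in $q_\epsilon$, and verifying that their sum stays strictly below $1$ is exactly where the sharp form of (S3) and the precise choice $\gamma=\gamma_J/(2\lambda_1+\gamma_J)$ have to be exploited. What makes this feasible is that the very strong forward contraction of the fast semigroup (recorded in $\beta=-\gamma/\epsilon$) drives the slow-direction term $K\epsilon/(\gamma+\epsilon\gamma_J)$ to $0$ as $\epsilon\downarrow0$, leaving only the fast-direction convolution term and the graph-map term to be dominated in the limit. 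The remaining points — convergence of the integrals defining $\mathfrak T_2$, the identification $\zeta+W^{*}=\Phi(\cdot,\omega,\check Z_0)$, and measurability of $\omega\mapsto\check Z_0$ — are routine and mirror Lemma 4.2, Remark 4.3 and the measurability argument in Theorem 4.4.
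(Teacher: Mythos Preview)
Your overall architecture is the same as the paper's --- a forward Lyapunov--Perron fixed-point scheme in $C_{-\gamma/\epsilon}^{+}$ with the graph constraint $U_0=\mathcal H^\epsilon(\omega,Y_0+V_0)-X_0$ --- and the integrals you write for $\mathfrak T_1,\mathfrak T_2$ match the paper's operators $\mathfrak J_i,\mathfrak J_j$. The final constants you extract, $\mathcal C_i=1/(1-q_\epsilon)$ and $\mathcal C_j=\gamma/\epsilon$, also coincide with the paper's.

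There is, however, a genuine gap in the contraction step. As you set things up, $\mathfrak T_1$ feeds the \emph{input} value $V(0)$ into the graph map, so the contraction estimate for the first term of $\mathfrak T_1$ is only
\[
\bigl\|e^{A_\alpha t/\epsilon}\bigl(\mathcal H^\epsilon(\omega,Y_0+V(0))-\mathcal H^\epsilon(\omega,Y_0+\widetilde V(0))\bigr)\bigr\|_{C_\beta^{H_1,+}}
\le \mathrm{Lip}\,\mathcal H^\epsilon\,\|V(0)-\widetilde V(0)\|_2
\le \mathrm{Lip}\,\mathcal H^\epsilon\,\|W-\widetilde W\|_{C_\beta^{+}},
\]
and hence your total contraction factor is $\mathrm{Lip}\,\mathcal H^\epsilon+q_\epsilon$. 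But this is \emph{not} forced below $1$ by (S3) and the choice $\gamma=\gamma_J/(2\lambda_1+\gamma_J)$: take $\lambda_1=\gamma_J=1$, so $\gamma=1/3$, $\lambda_1-\gamma=2/3$, and (S3) allows $K=0.3$; then at $\epsilon=0$ one has $\mathrm{Lip}\,\mathcal H^\epsilon\to K/(\lambda_1-\gamma-K)=9/11\approx 0.818$ and $q_\epsilon\to K/(\lambda_1-\gamma)=0.45$, giving a sum $\approx 1.27>1$. So the sentence ``one checks that $\mathrm{Lip}\,\mathcal H^\epsilon(\omega,\cdot)+q_\epsilon<1$'' is false in general, and the Banach fixed-point argument as written does not close.

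The paper circumvents exactly this by a sharper bookkeeping: in the contraction estimate it replaces $V_0-\check V_0$ not by $\|W-\widetilde W\|$ but by the \emph{output} $\mathfrak J_j(\Psi)[0]-\mathfrak J_j(\check\Psi)[0]$, i.e.\ by the backward integral itself, obtaining
\[
\|V_0-\check V_0\|_2\le \frac{K}{-\beta+\gamma_J}\,\|\Psi-\check\Psi\|_{C_\beta^{+}}=\frac{K\epsilon}{\gamma+\epsilon\gamma_J}\,\|\Psi-\check\Psi\|_{C_\beta^{+}}.
\]
This multiplies the $\mathrm{Lip}\,\mathcal H^\epsilon$ contribution by an $O(\epsilon)$ factor, so the full contraction factor $\rho$ tends to $K/(\lambda_1-\gamma)<1$ as $\epsilon\downarrow 0$. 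To repair your argument, either redefine $\mathfrak T_1$ so that the initial term uses $\mathfrak T_2(W)[0]$ in place of $V(0)$, or --- equivalently --- bound $\|V(0)-\widetilde V(0)\|_2$ via the $\mathfrak T_2$ formula when estimating the Lipschitz constant of $\mathfrak T$. With that single change your proof becomes essentially the paper's.
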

\begin{proof}
Assume that there are two dynamical orbits for random system (9)-(10), i.e., \begin{align*}\Phi(t,\omega,Z_{0})=(X(t,\omega,Z_{0}), Y(t,\omega,{Z}_{0}))^{T}\end{align*} and \begin{align*}\check{\Phi}(t,\omega,\check{Z}_{0})=(X(t,\omega,\check{Z}_{0}), Y(t,\omega,\check{Z}_{0}))^{T}.\end{align*} Then the difference \begin{align*}\Psi(t)=\check{\Phi}(t,\omega,\check{Z}_{0})-\Phi(t,\omega,Z_{0}):=(U(t),V(t))^{T}\end{align*} satisfies the equations
\begin{align}&dU=\frac{1}{\epsilon}A_{\alpha}Udt+\frac{1}{\epsilon}\mathrm{\tilde{F}}(U,V,\sigma_{1}\eta^{\epsilon}(\theta_{t}^{1}\omega_{1}),\sigma_{2}\xi(\theta_{t}^{2}\omega_{2}))dt,\\
&dV=JVdt+\mathrm{\tilde{G}}(U,V,\sigma_{1}\eta^{\epsilon}(\theta_{t}^{1}\omega_{1}),\sigma_{2}\xi(\theta_{t}^{2}\omega_{2}))dt.\end{align}
Where nonlinearities $\mathrm{\tilde{F}}$ and $\mathrm{\tilde{G}}$ are
\begin{align*}\mathrm{\tilde{F}}(U,V,\sigma_{1}\eta^{\epsilon}(\theta_{t}^{1}\omega_{1}),\sigma_{2}\xi(\theta_{t}^{2}\omega_{2}))
=&f(U+X+\sigma_{1}\eta^{\epsilon}(\theta_{t}^{1}\omega_{1}),V+Y+\sigma_{2}\xi(\theta_{t}^{2}\omega_{2}))\\&-f(X+\sigma_{1}\eta^{\epsilon}(\theta_{t}^{1}\omega_{1}),Y+\sigma_{2}\xi(\theta_{t}^{2}\omega_{2})),\end{align*}
\begin{align*}\mathrm{\tilde{G}}(U,V,\sigma_{1}\eta^{\epsilon}(\theta_{t}^{1}\omega_{1}),\sigma_{2}\xi(\theta_{t}^{2}\omega_{2}))
=&g(U+X+\sigma_{1}\eta^{\epsilon}(\theta_{t}^{1}\omega_{1}),V+Y+\sigma_{2}\xi(\theta_{t}^{2}\omega_{2}))\\&-g(X+\sigma_{1}\eta^{\epsilon}(\theta_{t}^{1}\omega_{1}),Y+\sigma_{2}\xi(\theta_{t}^{2}\omega_{2})).\end{align*}
\indent First, we claim that $\Psi(t)=(U(t),V(t))^{T}$ is a solution of (21)-(22) in $C_{\beta}^{+}$ for $\beta=-\frac{\gamma}{\epsilon}$
if
\begin{equation}\label{sde}\binom{U(t)}{V(t)}=\binom{e^{A_{\alpha}t/\epsilon}U(0)+\frac{1}{\epsilon}\int_{0}^{t}e^{A_{\alpha}(t-s)/\epsilon}\mathrm{\tilde{F}}(U,V,\sigma_{1}\eta^{\epsilon}(\theta_{s}^{1}\omega_{1}),\sigma_{2}\xi(\theta_{s}^{2}\omega_{2}))ds}{\int_{+\infty}^{t}e^{J(t-s)}\mathrm{\tilde{G}}(U,V,\sigma_{1}\eta^{\epsilon}(\theta_{s}^{1}\omega_{1}),\sigma_{2}\xi(\theta_{s}^{2}\omega_{2}))ds}.\end{equation}
It is proved with the help of variation of constants formula just like Lemma 4.1. Since the steps of proof are similar as in Lemma 4.1, so here we omit the proof. Next, it need to prove that $(U,V)^{T}$ is unique solution of (23) in $C_{\beta}^{+}$ with initial value $(U(0),V(0))^{T}=(U_{0},V_{0})^{T}$ such that $$(\check{X}_{0},\check{Y}_{0})^{T}=(U_{0},V_{0})^{T}+(X_{0},Y_{0})^{T}\in \mathcal{M}^{\epsilon}(\omega).$$
It is clear that
$$(\check{X}_{0},\check{Y}_{0})^{T}\in \mathcal{M}^{\epsilon}(\omega)$$ if and only if $$\check{X}_{0}=\frac{1}{\epsilon}
\int_{ - \infty }^{0} e^{A_{\alpha}(-s)}\mathrm{\tilde{F}}(X(s,\check{Y}_{0}),Y(s,\check{Y}_{0}),\sigma_{1}\eta^{\epsilon}(\theta_{s}^{1}\omega_{1}),\sigma_{2}\xi(\theta_{s}^{2}\omega_{2}))ds.$$
Since here $$(\check{X}_{0},\check{Y}_{0})^{T}=(U_{0},V_{0})^{T}+(X_{0},Y_{0})^{T}.$$ So it follows that
$$(\check{X}_{0},\check{Y}_{0})^{T}=(U_{0},V_{0})^{T}+(X_{0},Y_{0})^{T}\in \mathcal{M}^{\epsilon}(\omega)$$ if and only if \small{\begin{align*}U_{0}+X_{0}=&\frac{1}{\epsilon}\int_{-\infty}^{0}e^{A_{\alpha}(-s)}\mathrm{\tilde{F}}(X(s,{V}_{0}+Y_{0}),Y(s,{V}_{0}+Y_{0}),\sigma_{1}\eta^{\epsilon}(\theta_{s}^{1}\omega_{1}),\sigma_{2}\xi(\theta_{s}^{2}\omega_{2}))ds
\\=&\mathcal{H}^{\epsilon}(\omega,V_{0}+Y_{0}).\end{align*}} In short $$(\check{X}_{0},\check{Y}_{0})^{T}=(U_{0},V_{0})^{T}+(X_{0},Y_{0})^{T}\in \mathcal{M}^{\epsilon}(\omega)\triangleq\{Z_{0}\in \mathbb{H}:Z(t,\omega,Z_{0})\in C_{\beta}^{+}\},$$ if and only if \begin{align}U_{0}=-X_{0}+\mathcal{H}^{\epsilon}(\omega,V_{0}+Y_{0}).\end{align}
 For every $\Psi=(U,V)^{T}\in C_{\beta}^{+}$, take $\beta=-\frac{\gamma}{\epsilon}$, $t\geq0$ and define two operators
\begin{align*} &\mathfrak{J}_{i}(\Psi)[t]:=e^{A_{\alpha}t/\epsilon}U_{0}+\frac{1}{\epsilon}\int_{0}^{t}e^{A_{\alpha}(t-s)/\epsilon}\mathrm{\tilde{F}}(U(s),V(s),\sigma_{1}\eta^{\epsilon}(\theta_{s}^{1}\omega_{1}),\sigma_{2}\xi(\theta_{s}^{2}\omega_{2}))ds,\\
&\mathfrak{J}_{j}(\Psi)[t]:=\int_{+\infty}^{t}e^{F(t-s)}\mathrm{\tilde{G}}(U(s),V(s),\sigma_{1}\eta^{\epsilon}(\theta_{s}^{1}\omega_{1}),\sigma_{2}\xi(\theta_{s}^{2}\omega_{2}))ds.\end{align*}
Furthermore, Lyapunov-Perron transform $\mathfrak{J}:C_{-\frac{\gamma}{\epsilon}}^{+}\rightarrow C_{-\frac{\gamma}{\epsilon}}^{+}$ is defined as:
\begin{align*}\mathfrak{J}(\Psi)[t]=\binom{\mathfrak{J}_{i}(\Psi)[t]}{\mathfrak{J}_{j}(\Psi)[t]}=(\mathfrak{J}_{i}(\Psi)[t],\mathfrak{J}_{j}(\Psi)[t])^{T}.\end{align*}
For any $\Psi=(U,V)^{T},\check{\Psi}=(\check{U},\check{V})^{T}\in C_{-\frac{\gamma}{\epsilon}}^{+},$ we obtain the estimate from (24)
\begin{align*}||e^{A_{\alpha}t/\epsilon}(U_{0}-\check{U}_{0})||_{1}&\leq e^{-\lambda_{1}t/\epsilon}LipH^{\epsilon}||V_{0}-\check{V}_{0}||_{2}\\&\leq e^{-\lambda_{1}t/\epsilon}LipH^{\epsilon}||\int_{+\infty}^{0}e^{J(-s)}(\mathrm{\tilde{G}}(\Psi(s),\sigma_{1}\eta^{\epsilon}(\theta_{s}^{1}\omega_{1}),\sigma_{2}\xi(\theta_{s}^{2}\omega_{2}))\\&\indent-\mathrm{\tilde{G}}(\check{\Psi}(s),\sigma_{1}\eta^{\epsilon}(\theta_{s}^{1}\omega_{1}),\sigma_{2}\xi(\theta_{s}^{2}\omega_{2})))ds||_{2}
\\&\leq e^{-\lambda_{1}t/\epsilon}LipH^{\epsilon}K\int_{0}^{+\infty}e^{-\gamma_{J} s}||\Psi(s)-\check{\Psi}(s)||ds.
\end{align*}
So,
\begin{align*}||\mathfrak{J}_{i}(\Psi-\check{\Psi})||_{C_{\beta}^{1,+}}&\leq LipH^{\epsilon}\times K||\Psi-\check{\Psi}||_{C_{\beta}^{+}}\mathop {\mbox{sup} }\limits_{t\in[0,\infty)}\{e^{-(\beta+\frac{\lambda_{1}}{\epsilon})t}\int_{0}^{+\infty}e^{(-\gamma_{J}+\beta)s}ds\}\\&\indent+\frac{K}{\epsilon}||\Psi-\check{\Psi}||_{C_{\beta}^{+}}
\mathop {\mbox{sup} }\limits_{t\in[0,\infty)}\{e^{-\beta t}\int_{0}^{t}e^{-\lambda_{1}(t-s)/\epsilon}ds\}.\end{align*}
Hence
\begin{align}\label{sde}||\mathfrak{J}_{i}(\Psi-\check{\Psi})||_{C_{\beta}^{1,+}}\leq(\frac{Lip H^{\epsilon}\times K}{-\beta+\gamma_{J}}+\frac{K}{\lambda_{1}+\epsilon\beta})||\Psi-\check{\Psi}||_{C_{\beta}^{+}}.\end{align}
By the same way
\begin{align*}||\mathfrak{J}_{j}(\Psi-\check{\Psi})||_{C_{\beta}^{2,+}}\leq K||\Psi-\check{\Psi}||_{C_{\beta}^{+}}\mathop {\mbox{sup} }\limits_{t\in[0,\infty)}\{e^{-\beta t}\int_{t}^{+\infty}e^{(\gamma_{J})(t-s)}ds\}.\end{align*}
This implies \begin{align}\label{sde} ||\mathfrak{J}_{j}(\Psi-\check{\Psi})||_{C_{\beta}^{2,+}}\leq\frac{ K}{-\beta+\gamma_{J}}||\Psi-\check{\Psi}||_{C_{\beta}^{+}}.\end{align}
From Theorem 4.4, it is known that
\begin{align*}Lip\mathcal{H}^{\epsilon}(\omega,.)\leq\frac{K}{(\lambda_{1}-\gamma)[1-K(\frac{1}{\lambda_{1}-\gamma}+\frac{\epsilon}{\gamma+\epsilon\gamma_{J}})]}.\end{align*}
Now, (25)-(26)in combine form is obtained as
\begin{align*}||\mathfrak{J}(\Psi-\check{\Psi})||_{C_{-\frac{\gamma}{\epsilon}}^{+}}\leq\rho(\lambda_{1},\gamma_{J},K,\gamma,\epsilon)||\Psi-\check{\Psi}||_{C_{-\frac{\gamma}{\epsilon}}^{+}},\end{align*}
where,
\begin{align*}\rho(\lambda_{1},\gamma_{J},K,\gamma,\epsilon)&=\frac{K}{\lambda_{1}+\epsilon\beta}+\frac{ K}
{-\beta+\gamma_{J}}+\frac{ K^{2}}{(\lambda_{1}-\gamma)(-\beta+\gamma_{J})[1-K(\frac{1}{\lambda_{1}-\gamma}+\frac{\epsilon}{\gamma+\epsilon\gamma_{J}})]},\\
&\rightarrow\frac{K}{\lambda_{1}}+\frac{K}
{-\beta+\gamma_{J}}+\frac{ K^{2}}{(\lambda_{1}-\gamma)(-\beta+\gamma_{J})[1-K(\frac{1}{\lambda_{1}-\gamma})]},\mbox{ as } \epsilon\rightarrow0.\end{align*}
By taking $\beta=-\frac{\gamma}{\epsilon}$, it is obtained that
\begin{align}\rho(\lambda_{1},\gamma_{J},K,\gamma,\epsilon)\rightarrow\frac{K}{\lambda_{1}}\mbox{ as }\epsilon\rightarrow0.\end{align}
By (11), there is a sufficiently small constant $\check{\epsilon}_{0}>0$ such that
\begin{align*}\rho(\lambda_{1},\gamma_{J},K,\gamma,\epsilon)<1,\mbox{ for all }0<\epsilon<\check{\epsilon}_{0}.\end{align*}
So, the operator $\mathfrak{J}$ is strictly contractive and has a unique fixed point $\Psi$ in $C_{-\frac{\gamma}{\epsilon}}^{+}$. By Banach fixed point theorem, this unique fixed point is called unique solution of (23) and it satisfies
\begin{align*}(\check{X}_{0},\check{Y}_{0})^{T}=(U_{0},V_{0})^{T}+(X_{0},Y_{0})^{T}\in \mathcal{M}^{\epsilon}(\omega).\end{align*}
Furthermore, we have
\begin{align*}||\Psi||_{C_{-\frac{\gamma}{\epsilon}}^{+}}\leq\frac{1}{1-K(\frac{1}{\lambda_{1}-\gamma}+\frac{\epsilon}{\gamma+\epsilon\gamma_{J}})}||\Psi_{0}||_{C_{-\frac{\gamma}{\epsilon}}^{+}},\end{align*}
this implies that
\begin{align*}||\Phi(t,\omega,Z_{0})-\check{\Phi}(t,\omega,\check{Z}_{0})||_{C_{-\frac{\gamma}{\epsilon}}^{+}}\leq \frac{e^{-\frac{\gamma}{\epsilon}t}}{1-K(\frac{1}{\lambda_{1}-\gamma}+\frac{\epsilon}{\gamma+\epsilon\gamma_{J}})}||Z_{0}-\check{Z}_{0}||_{C_{-\frac{\gamma}{\epsilon}}^{+}},t\geq0.\end{align*}
Hence, it obtains the exponential tracking property of $\mathcal{M}^{\epsilon}(\omega)$.\end{proof}
\begin{remark}
From Theorem 4.4 and Theorem 4.5, it is concluded that the random dynamical system has an exponential tracking random slow manifold. Since there is a relation between solutions of stochastic system (1)-(2) and random system (9)-(10). So if (1)-(2) satisfies the suppositions of Theorem 4.4 and Theorem 4.5, then it also posses exponential tracking random slow manifold, i.e.,
\begin{align*}
\mathcal{\tilde{M}}^{\epsilon}(\omega)=\mathcal{M}^{\epsilon}(\omega)+(\sigma_{1}\eta^{\epsilon}(\omega_{1}),\sigma_{2}\xi(\omega_{2}))^{T}=\{(\mathcal{\tilde{H}}^{\epsilon}(\omega,Y_{0}),Y_{0})^{T}:Y_{0}\in H_{2}\},\end{align*}
where,
\begin{align*}
\mathcal{\tilde{H}}^{\epsilon}(\omega,Y_{0})=\mathcal{H}^{\epsilon}(\omega,Y_{0})+\sigma_{1}\eta^{\epsilon}(\omega_{1}).\end{align*}
\end{remark}
\section{Approximation of a random slow manifold}
From random system (9)-(10), we get the following equations by letting time scale $\tau=\frac{t}{\epsilon}$,
\begin{align}
\frac{dX(\tau\epsilon)}{d\tau}&=A_{\alpha}X(\tau\epsilon)+f(X(\tau\epsilon)+\sigma_{1}\eta^{\epsilon}(\theta_{\tau\epsilon}^{1}\omega_{1}),Y(\tau\epsilon)+\sigma_{2}\xi(\theta_{\tau\epsilon}^{2}\omega_{2})),\\
\frac{dY(\tau\epsilon)}{d\tau}&=\epsilon[JY(\tau\epsilon)+g(X(\tau\epsilon)+\sigma_{1}\eta^{\epsilon}(\theta_{\tau\epsilon}^{1}\omega_{1}),Y(\tau\epsilon)+\sigma_{2}\xi(\theta_{\tau\epsilon}^{2}\omega_{2}))].\end{align}
In integral form (28)-(29) can be written as
\begin{align}
X(\tau\epsilon)&=\int_{-\infty}^{\tau}e^{A_{\alpha}(\tau-s)}f(X(s\epsilon)+\sigma_{1}\eta^{\epsilon}(\theta_{s\epsilon}^{1}\omega_{1}),Y(s\epsilon)+\sigma_{2}\xi(\theta_{s\epsilon}^{2}\omega_{2}))ds,\\
Y(\tau\epsilon)&=Y_{0}+\epsilon\int_{0}^{\tau}[JY(s\epsilon)+g(X(s\epsilon)+\sigma_{1}\eta^{\epsilon}(\theta_{s\epsilon}^{1}\omega_{1}),Y(s\epsilon)+\sigma_{2}\xi(\theta_{s\epsilon}^{2}\omega_{2}))]ds.\end{align}
For a sufficiently small $\epsilon>0$, we approximate the slow manifold by expanding the solution of (28) such as
\begin{align}
X(\tau\epsilon)=X_{0}(\tau)+\epsilon X_{1}(\tau)+\epsilon^{2}X_{2}(\tau)+\cdot\cdot\cdot,\end{align}
with initial data
\begin{align}
X(0)=\mathcal{H}^{\epsilon}(\omega,Y_{0})=\mathcal{H}^{(0)}(\omega,Y_{0})+\epsilon\mathcal{H}^{(1)}(\omega,Y_{0})+\epsilon^{2}\mathcal{H}^{2}(\omega,Y_{0})+\cdot\cdot\cdot.\end{align}
We have the Taylor expansions
\begin{align*}
f&(X(\tau\epsilon)+\sigma_{1}\eta^{\epsilon}(\theta_{\tau\epsilon}^{1}\omega_{1}),Y(\tau\epsilon)+\sigma_{2}\xi(\theta_{\tau\epsilon}^{2}\omega_{2}))
\\=&f(X_{0}+\sigma_{1}\eta^{\epsilon}(\theta_{\tau\epsilon}^{1}\omega_{1}),Y_{0}+\sigma_{2}\xi(\theta_{\tau\epsilon}^{2}\omega_{2}))+f_{X}(X_{0}+\sigma_{1}\eta^{\epsilon}(\theta_{\tau\epsilon}^{1}\omega_{1}),Y_{0}+\sigma_{2}\xi(\theta_{\tau\epsilon}^{2}\omega_{2}))\\&(X(\tau\epsilon)-X_{0})+f_{Y}(X_{0}+\sigma_{1}\eta^{\epsilon}(\theta_{\tau\epsilon}^{1}\omega_{1}),Y_{0}+\sigma_{2}\xi(\theta_{\tau\epsilon}^{2}\omega_{2}))(Y(\tau\epsilon)-Y_{0}),\\=&f(X_{0}+\sigma_{1}\eta^{\epsilon}(\theta_{\tau\epsilon}^{1}\omega_{1}),Y_{0}+\sigma_{2}\xi(\theta_{\tau\epsilon}^{2}\omega_{2}))+f_{X}(X_{0}+\sigma_{1}\eta^{\epsilon}(\theta_{\tau\epsilon}^{1}\omega_{1}),Y_{0}+\sigma_{2}\xi(\theta_{\tau\epsilon}^{2}\omega_{2}))\\&\[\epsilon X_{1}(\tau)+\epsilon^{2}X_{2}(\tau)+\cdot\cdot\cdot\]+f_{Y}(X_{0}+\sigma_{1}\eta^{\epsilon}(\theta_{\tau\epsilon}^{1}\omega_{1}),Y_{0}+\sigma_{2}\xi(\theta_{\tau\epsilon}^{2}\omega_{2}))\[\epsilon\int_{0}^{\tau}[JY(s\epsilon)\\&+g(X(s\epsilon)+\sigma_{1}\eta^{\epsilon}(\theta_{s\epsilon}^{1}\omega_{1}),Y(s\epsilon)+\sigma_{2}\xi(\theta_{s\epsilon}^{2}\omega_{2}))]ds\],\end{align*}
and
\begin{align*}
g&(X(\tau\epsilon)+\sigma_{1}\eta^{\epsilon}(\theta_{\tau\epsilon}^{1}\omega_{1}),Y(\tau\epsilon)+\sigma_{2}\xi(\theta_{\tau\epsilon}^{2}\omega_{2}))
\\=&g(X_{0}+\sigma_{1}\eta^{\epsilon}(\theta_{\tau\epsilon}^{1}\omega_{1}),Y_{0}+\sigma_{2}\xi(\theta_{\tau\epsilon}^{2}\omega_{2}))+g_{X}(X_{0}+\sigma_{1}\eta^{\epsilon}(\theta_{\tau\epsilon}^{1}\omega_{1}),Y_{0}+\sigma_{2}\xi(\theta_{\tau\epsilon}^{2}\omega_{2}))\\&(X(\tau\epsilon)-X_{0})+g_{Y}(X_{0}+\sigma_{1}\eta^{\epsilon}(\theta_{\tau\epsilon}^{1}\omega_{1}),Y_{0}+\sigma_{2}\xi(\theta_{\tau\epsilon}^{2}\omega_{2}))(Y(\tau\epsilon)-Y_{0}),\\=&g(X_{0}+\sigma_{1}\eta^{\epsilon}(\theta_{\tau\epsilon}^{1}\omega_{1}),Y_{0}+\sigma_{2}\xi(\theta_{\tau\epsilon}^{2}\omega_{2}))+g_{X}(X_{0}+\sigma_{1}\eta^{\epsilon}(\theta_{\tau\epsilon}^{1}\omega_{1}),Y_{0}+\sigma_{2}\xi(\theta_{\tau\epsilon}^{2}\omega_{2}))\\&\[\epsilon X_{1}(\tau)+\epsilon^{2}X_{2}(\tau)+\cdot\cdot\cdot\]+g_{Y}(X_{0}+\sigma_{1}\eta^{\epsilon}(\theta_{\tau\epsilon}^{1}\omega_{1}),Y_{0}+\sigma_{2}\xi(\theta_{\tau\epsilon}^{2}\omega_{2}))\[\epsilon\int_{0}^{\tau}[JY(s\epsilon)\\&+g(X(s\epsilon)+\sigma_{1}\eta^{\epsilon}(\theta_{s\epsilon}^{1}\omega_{1}),Y(s\epsilon)+\sigma_{2}\xi(\theta_{s\epsilon}^{2}\omega_{2}))]ds\].\end{align*}
Putting the Taylor expansion of $f$ and value of $X(\tau\epsilon)$ in (28),
\begin{align*}
&\frac{d\[X_{0}(\tau)+\epsilon X_{1}(\tau)+\epsilon^{2}X_{2}(\tau)+\cdot\cdot\cdot\]}{d\tau}\\=&A_{\alpha}\[X_{0}(\tau)+\epsilon X_{1}(\tau)+\epsilon^{2}X_{2}(\tau)+\cdot\cdot\cdot\]+f(X_{0}+\sigma_{1}\eta^{\epsilon}(\theta_{\tau\epsilon}^{1}\omega_{1}),Y_{0}+\sigma_{2}\xi(\theta_{\tau\epsilon}^{2}\omega_{2}))\\&+f_{X}(X_{0}+\sigma_{1}\eta^{\epsilon}(\theta_{\tau\epsilon}^{1}\omega_{1}),Y_{0}+\sigma_{2}\xi(\theta_{\tau\epsilon}^{2}\omega_{2}))\[\epsilon X_{1}(\tau)+\epsilon^{2}X_{2}(\tau)+\cdot\cdot\cdot\]+f_{Y}(X_{0}\\&+\sigma_{1}\eta^{\epsilon}(\theta_{\tau\epsilon}^{1}\omega_{1}),Y_{0}+\sigma_{2}\xi(\theta_{\tau\epsilon}^{2}\omega_{2}))\[\epsilon\int_{0}^{\tau}[JY(s\epsilon)+g(X(s\epsilon)+\sigma_{1}\eta^{\epsilon}(\theta_{s\epsilon}^{1}\omega_{1}),Y(s\epsilon)\\&+\sigma_{2}\xi(\theta_{s\epsilon}^{2}\omega_{2}))]ds\].
\end{align*}
Now, by comparing the terms with equal powers of $\epsilon$, it is concluded  that
\begin{align*}
\frac{dX_{0}(\tau)}{d\tau}=&A_{\alpha}X_{0}(\tau)+f(X_{0}+\sigma_{1}\eta^{\epsilon}(\theta_{\tau\epsilon}^{1}\omega_{1}),Y_{0}+\sigma_{2}\xi(\theta_{\tau\epsilon}^{2}\omega_{2})),\\&\mbox{ with initial value }X_{0}(0)=\mathcal{H}^{(0)}(\omega,Y_{0}),\\
\frac{dX_{1}(\tau)}{d\tau}=&\[A_{\alpha}+f_{X}(X_{0}+\sigma_{1}\eta^{\epsilon}(\theta_{\tau\epsilon}^{1}\omega_{1}),Y_{0}+\sigma_{2}\xi(\theta_{\tau\epsilon}^{2}\omega_{2}))\]X_{1}(\tau)+f_{Y}(X_{0}\\&+\sigma_{1}\eta^{\epsilon}(\theta_{\tau\epsilon}^{1}\omega_{1}),Y_{0}+\sigma_{2}\xi(\theta_{\tau\epsilon}^{2}\omega_{2}))\int_{0}^{\tau}[JY(s\epsilon)+g(X(s\epsilon)\\&+\sigma_{1}\eta^{\epsilon}(\theta_{s\epsilon}^{1}\omega_{1}),Y(s\epsilon)+\sigma_{2}\xi(\theta_{s\epsilon}^{2}\omega_{2}))]ds,
\\&\mbox{ with initial value } X_{1}(0)=\mathcal{H}^{(1)}(\omega,Y_{0}).\end{align*} We get the values of $X_{0}(\tau)$ and $X_{1}(\tau)$ by solving above two equations, i.e.,
\begin{align*}
X_{0}(\tau)=&e^{A_{\alpha}\tau}\mathcal{H}^{(0)}(\omega,Y_{0})+\int_{0}^{\tau}e^{A_{\alpha}(\tau-s)}f(X_{0}(s)+\sigma_{1}\eta^{\epsilon}(\theta_{s\epsilon}^{1}\omega_{1}),Y_{0}(s)+\sigma_{2}\xi(\theta_{s\epsilon}^{2}\omega_{2}))ds,\\
X_{1}(\tau)=&e^{A_{\alpha}\tau+\int_{0}^{\tau}f_{X}(X_{0}(s)+\sigma_{1}\eta^{\epsilon}(\theta_{s\epsilon}^{1}\omega_{1}),Y_{0}(s)+\sigma_{2}\xi(\theta_{s\epsilon}^{2}\omega_{2}))ds}\times\mathcal{H}^{(1)}(\omega,Y_{0})\\&+\int_{0}^{\tau}e^{A_{\alpha}(\tau-s)+\int_{s}^{\tau}f_{X}(X_{0}(r)+\sigma_{1}\eta^{\epsilon}(\theta_{r\epsilon}^{1}\omega_{1}),Y_{0}(r)+\sigma_{2}\xi(\theta_{r\epsilon}^{2}\omega_{2}))dr}
\\&\times f_{Y}(X_{0}(s)+\sigma_{1}\eta^{\epsilon}(\theta_{s\epsilon}^{1}\omega_{1}),Y_{0}(s)+\sigma_{2}\xi(\theta_{s\epsilon}^{2}\omega_{2}))\[\int_{0}^{s}[JY(r\epsilon)\\&+g(X(r\epsilon)+\sigma_{1}\eta^{\epsilon}(\theta_{r\epsilon}^{1}\omega_{1}),Y(r\epsilon)+\sigma_{2}\xi(\theta_{r\epsilon}^{2}\omega_{2}))]dr\]ds.
\end{align*}
From (18),
\begin{align*}\label{sde}\mathcal{H}^{\epsilon}(\omega, Y_{0})=&\frac{1}{\epsilon}\int_{-\infty}^{0}e^{-A_{\alpha}s/\epsilon}f(X(s)+\sigma_{1}\eta^{\epsilon}(\theta_{s}^{1}\omega_{1}),Y(s)+\sigma_{2}\xi(\theta_{s}^{2}\omega_{2}))ds,
\\=&\int_{-\infty}^{0}e^{-A_{\alpha}s}f(X(s\epsilon)+\sigma_{1}\eta^{\epsilon}(\theta_{s\epsilon}^{1}\omega_{1}),Y(s\epsilon)+\sigma_{2}\xi(\theta_{s\epsilon}^{2}\omega_{2}))ds,
\\=&\int_{-\infty}^{0}e^{-A_{\alpha}s}f(X_{0}+\sigma_{1}\eta^{\epsilon}(\theta_{s\epsilon}^{1}\omega_{1}),Y_{0}+\sigma_{2}\xi(\theta_{s\epsilon}^{2}\omega_{2}))ds\\&+\epsilon\int_{-\infty}^{0}e^{-A_{\alpha}s}\[f_{X}(X_{0}+\sigma_{1}\eta^{\epsilon}(\theta_{s\epsilon}^{1}\omega_{1}),Y_{0}+\sigma_{2}\xi(\theta_{s\epsilon}^{2}\omega_{2}))X_{1}(s)\\&+f_{Y}(X_{0}+\sigma_{1}\eta^{\epsilon}(\theta_{s\epsilon}^{1}\omega_{1}),Y_{0}+\sigma_{2}\xi(\theta_{s\epsilon}^{2}\omega_{2}))\times\int_{0}^{s}[JY(r\epsilon)+g(X(r\epsilon)\\&+\sigma_{1}\eta^{\epsilon}(\theta_{r\epsilon}^{1}\omega_{1}),Y(r\epsilon)+\sigma_{2}\xi(\theta_{r\epsilon}^{2}\omega_{2}))]dr\]ds+\mathcal{O}(\epsilon^{2}).
\end{align*}
Comparing above equation with equation (33), we find that
\begin{align*}
\mathcal{H}^{(0)}(\omega, Y_{0})=&\int_{-\infty}^{0}e^{-A_{\alpha}s}f(X_{0}+\sigma_{1}\eta^{\epsilon}(\theta_{s\epsilon}^{1}\omega_{1}),Y_{0}+\sigma_{2}\xi(\theta_{s\epsilon}^{2}\omega_{2}))ds,\\
\mathcal{H}^{(1)}(\omega,Y_{0})=&\int_{-\infty}^{0}e^{-A_{\alpha}s}\[f_{X}(X_{0}+\sigma_{1}\eta^{\epsilon}(\theta_{s\epsilon}^{1}\omega_{1}),Y_{0}+\sigma_{2}\xi(\theta_{s\epsilon}^{2}\omega_{2}))X_{1}(s)\\&+f_{Y}(X_{0}+\sigma_{1}\eta^{\epsilon}(\theta_{s\epsilon}^{1}\omega_{1}),Y_{0}+\sigma_{2}\xi(\theta_{s\epsilon}^{2}\omega_{2}))\times\int_{0}^{s}[JY(r\epsilon)\\&+g(X(r\epsilon)+\sigma_{1}\eta^{\epsilon}(\theta_{r\epsilon}^{1}\omega_{1}),Y(r\epsilon)+\sigma_{2}\xi(\theta_{r\epsilon}^{2}\omega_{2}))]dr\]ds.\end{align*}
So, the approximation of random slow manifold $\mathcal{M}^{\epsilon}(\omega)=\{(\mathcal{H}^{\epsilon}(\omega,Y_{0}),Y_{0})^{T}:Y_{0}\in H_{2}\}$ for random system
(9)-(10) up to order $\mathcal{O}(\epsilon^{2})$ is given by
\begin{align}
\mathcal{H}^{(\epsilon)}(\omega, Y_{0})=\mathcal{H}^{(0)}(\omega, Y_{0})+\epsilon\mathcal{H}^{(1)}(\omega, Y_{0})+\mathcal{O}(\epsilon^{2}).
\end{align}
Hence, the original system (1)-(2) has slow manifold $\mathcal{\tilde{M}}^{\epsilon}(\omega)=\{(\mathcal{\tilde{H}}^{\epsilon}(\omega,Y_{0}),Y_{0})^{T}:Y_{0}\in H_{2}\}$ up to order $\mathcal{O}(\epsilon^{2})$, where
\begin{align}
\mathcal{\tilde{H}}^{\epsilon}(\omega,Y_{0})=\mathcal{H}^{(0)}(\omega, Y_{0})+\epsilon\mathcal{H}^{(1)}(\omega, Y_{0})+\sigma_{1}\eta(\omega_{1})+\mathcal{O}(\epsilon^{2}).
\end{align}
\section{Examples}
\noindent \textbf{Example 1.}
Take a system
\begin{align}
&\dot{x}=\frac{1}{\epsilon}A_{\alpha}x+\frac{1}{6\epsilon}(y)^{2}+\frac{\sigma_{1}}{\sqrt[\alpha_{1}]{\epsilon}}\dot{L}_{t}^{\alpha_{1}}, \mbox{ in } H_{1}=L^{2}(-1,1),\\
&\dot{y}=y+\frac{1}{3}\sin\int_{-1}^{1}x(a)da+\sigma_{2}\dot{L}_{t}^{\alpha_{2}}, \mbox{ in } H_{2}=\mathbb{R},\end{align}
where $x$ is fast mode, $y$ is slow mode. While $\dot{L}_{t}^{\alpha_{1}}$ and $\dot{L}_{t}^{\alpha_{2}}$ are derivatives of scalar symmetric $\alpha$-stable L$\acute{e}$vy processes, with $1<\alpha<2$. Nonlinearities  $f=\frac{1}{6}(y)^{2}$ and $g=\frac{1}{3}\sin\int_{-1}^{1}x(a)da$ are Lipschitz continuous. Random system corresponding to stochastic system (36)-(37) is
\begin{align}
&\dot{X}=\frac{1}{\epsilon}A_{\alpha}X^{\epsilon}+\frac{1}{6\epsilon}((Y+\sigma_{2}\xi(\theta_{t}^{2}\omega_{2}))^{2}),\\
&\dot{Y}=Y+\frac{1}{3}\sin\left(\int_{-1}^{1}[X+\sigma_{1}\eta^{\epsilon}(\theta_{t}^{1}\omega_{1})]da\right).\end{align}
For sufficiently small $\epsilon>0$ and $Y_{0}\in \mathbb{R}$, random evolutionary system $(38)-(39)$ posses a random slow manifold, i.e., $$\mathcal{M}^{\epsilon}(\omega)=\{(\mathcal{H}^{\epsilon}(\omega,Y_{0}),Y_{0})^{T}:Y_{0}\in \mathbb{R}),$$
where
$$\mathcal{H}^{\epsilon}(\omega,Y_{0})=\frac{1}{6\epsilon}\int_{-\infty}^{0}e^{-A_{\alpha}s/\epsilon}\left(Y(s)+\sigma_{2}\xi(\theta_{s}^{2}\omega_{2})\right)^{2}ds.$$
Approximate slow manifold for nonlocal system (36)-(37) up to order $\mathcal{O}(\epsilon)$ is
$$\mathcal{\tilde{H}}^{\epsilon}(\omega,Y_{0})=H^{0}(\omega,Y_{0})+\sigma_{1}\eta^{\epsilon}(\omega_{1})+\mathcal{O}(\epsilon).$$
Where
$$H^{0}(\omega,Y_{0})=\frac{1}{6\epsilon}\int_{-\infty}^{0}e^{-A_{\alpha}s}(Y_{0}+\sigma_{2}\xi(\theta_{s\epsilon}^{2}\omega_{2}))^{2}ds.$$

\noindent \textbf{Example 2.}
Take a nonlocal fast-slow stochastic system
\begin{align}
&\dot{x}=\frac{1}{\epsilon}A_{\alpha}x+\frac{0.01}{\epsilon}(\sqrt{y^{2}+5}-\sqrt{5})+\frac{\sigma_{1}}{\sqrt[\alpha_{1}]{\epsilon}}\dot{L}_{t}^{\alpha_{1}}, \mbox{ in } H_{1}=L^{2}(-1,1),\\
&\dot{y}=-y+(0.01\times b)\sin\int_{-1}^{1}xda+\sigma_{2}\dot{L}_{t}^{\alpha_{2}}, \mbox{ in } H_{2}=\mathbb{R},\end{align}
where $x$ is fast mode, $y$ is slow mode, $a$ and $b$ are positive real unknown parameter. While $\dot{L}_{t}^{\alpha_{1}}$ and $\dot{L}_{t}^{\alpha_{2}}$ are derivatives of scalar symmetric $\alpha$-stable L$\acute{e}$vy processes, with $1<\alpha<2$. Lipschitz continuous nonlinearities are $f=0.01(\sqrt{y^{2}+5}-\sqrt{5})$ and $g=(0.01\times b)\int_{-1}^{1}xda$. Lipschitz constants of $f$ and $g$ are $L_{f}=0.01$ and $L_{g}=0.01\times b$ respectively. Random system corresponding to stochastic system (40)-(41):
\begin{align}
&\dot{X}=\frac{1}{\epsilon}A_{\alpha}X+\frac{0.01}{\epsilon}(\sqrt{(Y+\sigma_{2}\xi(\theta_{t}^{2}\omega_{2}))^{2}+5}-\sqrt{5}),\\
&\dot{Y}=-Y+(0.01\times b)\sin\left(\int_{-1}^{1}[X+\sigma_{1}\eta^{\epsilon}(\theta_{t}^{1}\omega_{1})]da\right).\end{align}
For sufficiently small $\epsilon>0$, random system $(42)-(43)$ posses a exponential tracking slow manifold,  $$\mathcal{M}^{\epsilon}(\omega)=\{(\mathcal{H}^{\epsilon}(\omega,Y_{0}),Y_{0})^{T}:Y_{0}\in \mathbb{R}),$$
where
$$\mathcal{H}^{\epsilon}(\omega,Y_{0})=\frac{0.01}{\epsilon}\int_{-\infty}^{0}e^{-A_{\alpha}s/\epsilon}(\sqrt{(Y_{0}+\sigma_{2}\xi(\theta_{t}^{2}\omega_{2}))^{2}+5}-\sqrt{5})ds.$$
Approximate slow manifold for nonlocal system (41)-(42) up to order $\mathcal{O}(\epsilon)$ is
$$\mathcal{\tilde{H}}^{\epsilon}(\omega,Y_{0})=H^{0}(\omega,Y_{0})+\sigma_{1}\eta^{\epsilon}(\omega_{1})+\mathcal{O}(\epsilon).$$
Where for a fixed $Y_{0}\in \mathbb{R}$,
$$H^{0}(\omega,Y_{0})=\frac{0.01}{\epsilon}\int_{-\infty}^{0}e^{-A_{\alpha}s}(\sqrt{(Y_{0}+\sigma_{2}\xi(\theta_{s\epsilon}^{2}\omega_{2}))^{2}+5}-\sqrt{5})ds.$$
We have conducted the numerical simulation for example 2. The simulation of example 1 is similar, so we omit that.
\newpage
\begin{figure}[http]
  \centering
  \begin{minipage}[b]{0.4\textwidth}
  \includegraphics[width=6cm,height=4cm]{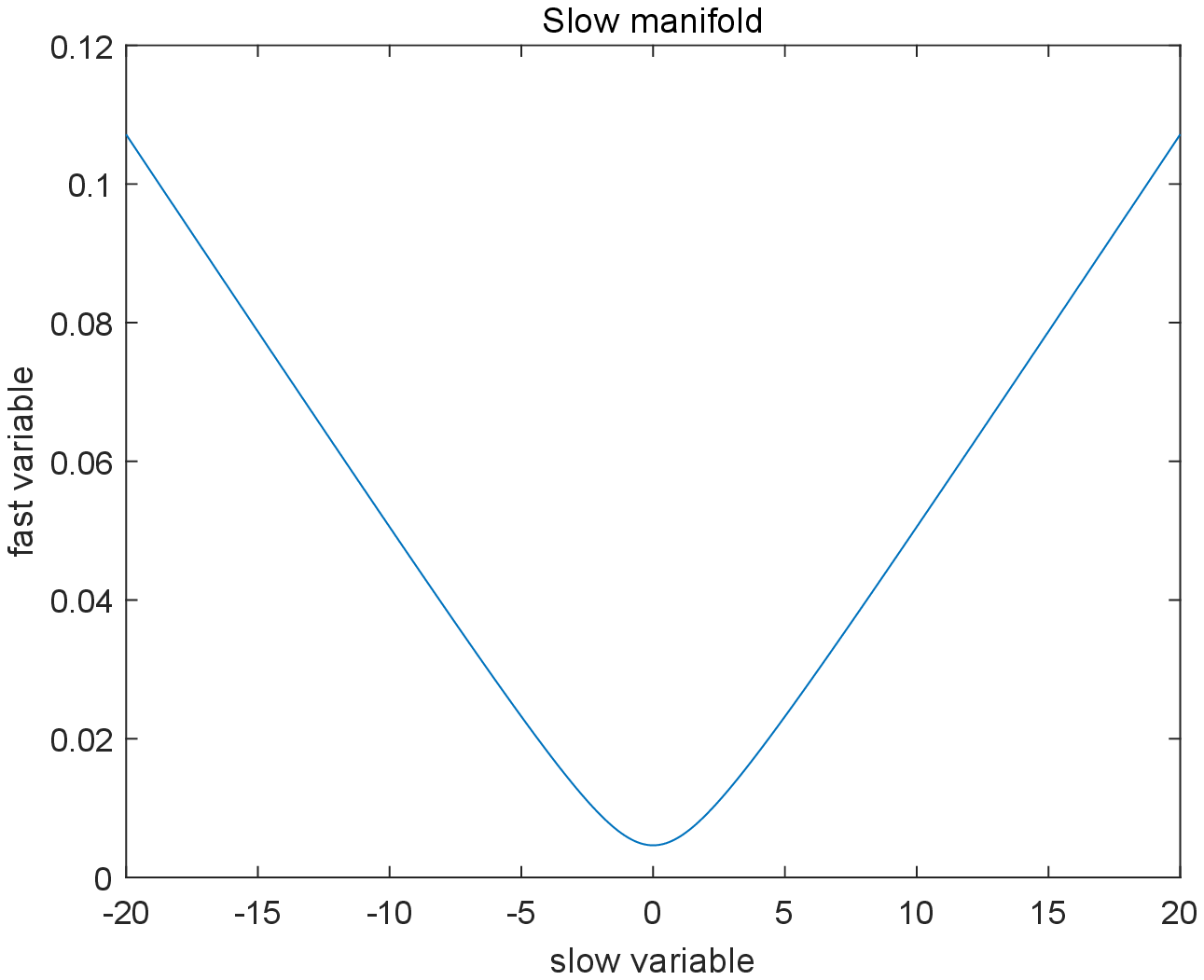}
   \label{fig:graph1}
 \end{minipage}
 \hfill
 \begin{minipage}[b]{0.4\textwidth}
  \includegraphics[width=6cm,height=4cm]{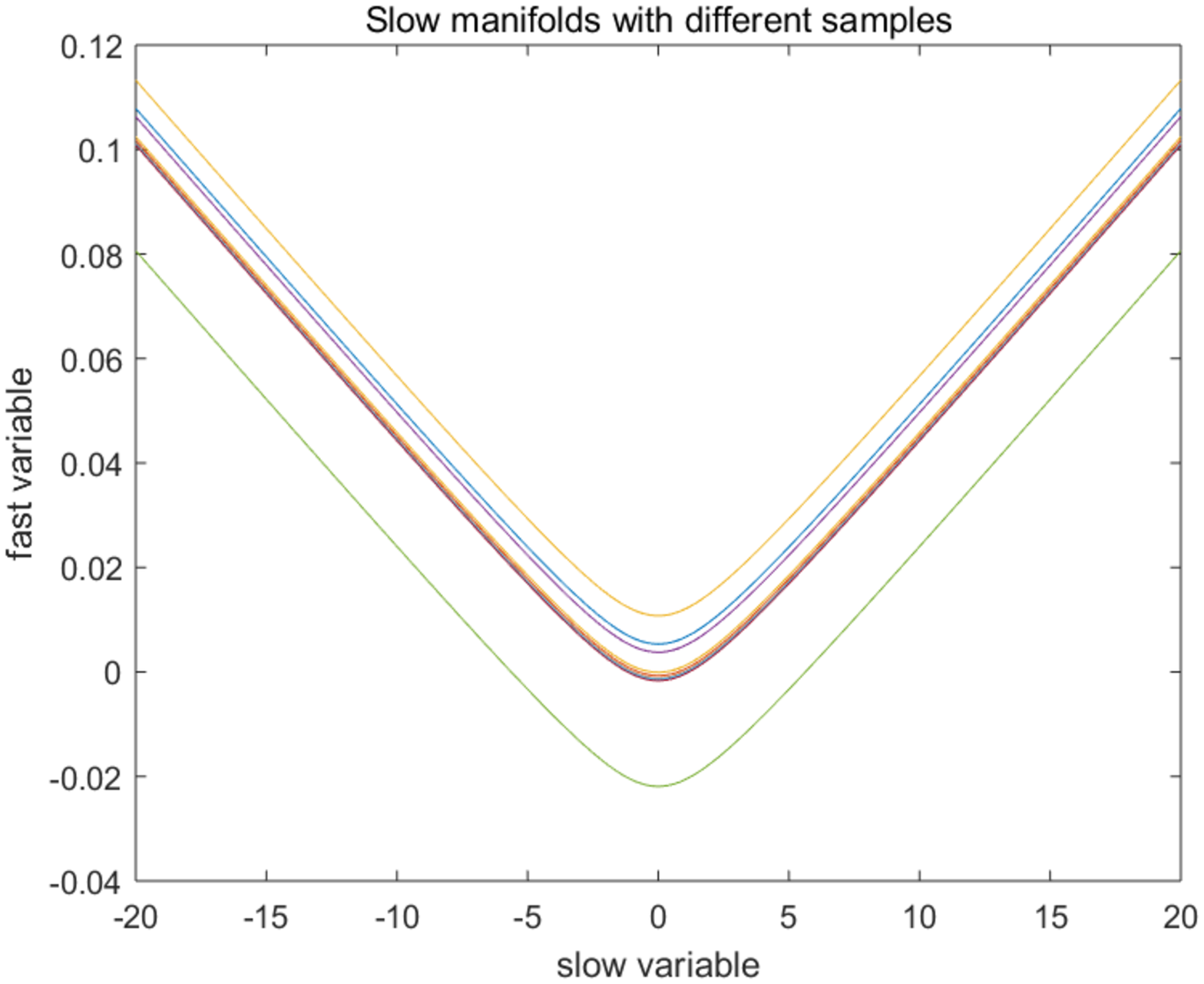}
  \label{fig:graph1}

  \end{minipage}
 \caption{ (left) Random slow manifold for one sample, (right) random slow manifold for different samples.}
\end{figure}

\begin{figure}[http]
  \centering
  \begin{minipage}[b]{0.4\textwidth}
  \includegraphics[width=6cm,height=4cm]{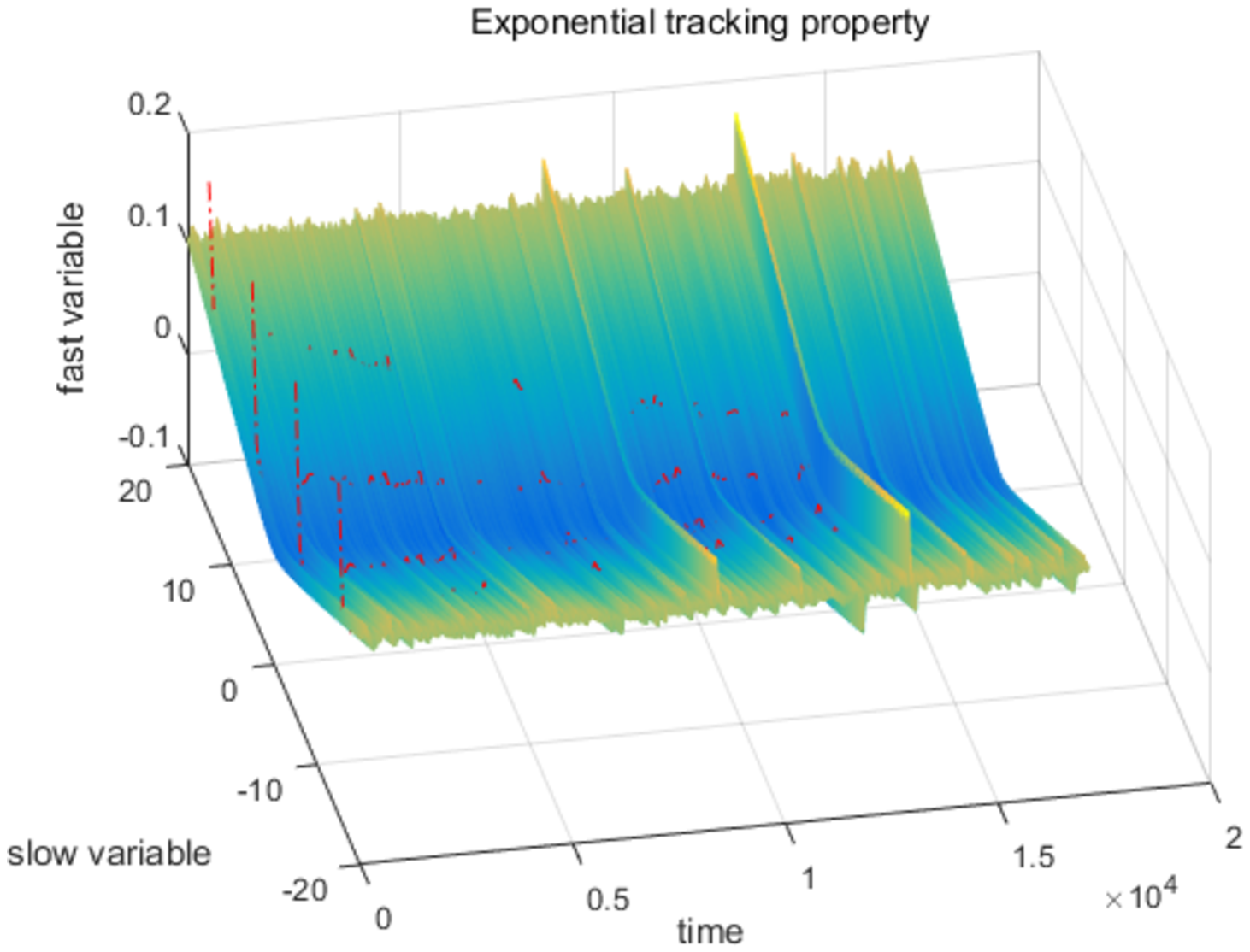}
   \label{fig:graph1}
 \end{minipage}
 \hfill
 \begin{minipage}[b]{0.4\textwidth}
  \includegraphics[width=6cm,height=4cm]{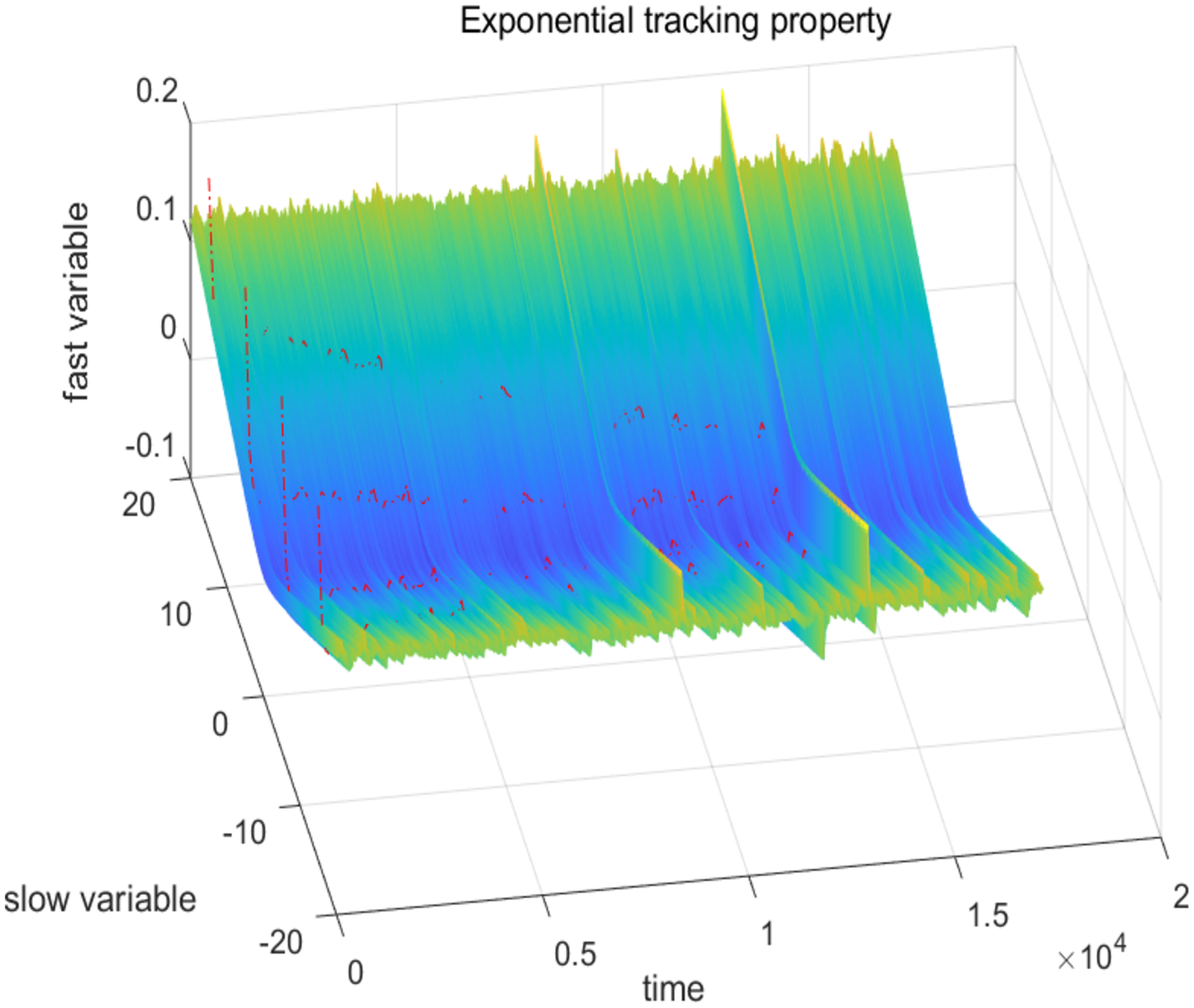}
  \label{fig:graph1}

  \end{minipage}
 \caption{ (left) Exponential tracking property in the system for $\alpha=1.2$ and $\epsilon=0.01$, (right) exponential tracking property in the system for $\alpha=1$ and $\epsilon=0.01$.}
\end{figure}



\bibliographystyle{line}
\bibliography{JAMS-paper}

\end{document}